\newtheorem{thm}{Theorem}[section]
\newtheorem{cor}[thm] {Corollary}
\newtheorem{lem} [thm]{Lemma}
\theoremstyle{definition} % Definitions, examples, remarks, algorithms should be in roman type, not italic.
\newtheorem{ex}[thm]{Example}
\newtheorem{defn}[thm]{Definition}
\raggedbottom \pagestyle{myheadings} \hbadness = 10000 \tolerance = 10000
\numberwithin{equation}{section}
\newcommand\bfB{\mathbf{B}}
\newcommand\diag{\operatorname{diag}}
\begin{document}
	\label{'ubf'}
	\setcounter{page}{1} %Put here the starting page number
	
	\markboth {\hspace*{-9mm} \centerline{\footnotesize \sc
			% Put here the left page top label
			Signed Distance in Signed Graphs}
	}
	{ \centerline {\footnotesize \sc
			Hameed, Shijin, Soorya, Germina, Zaslavsky}
	}
	\begin{center}
		{
			{\huge \textbf{Signed Distance in Signed Graphs
				}
			}
			
			\bigskip
			
			Shahul Hameed K \footnote{\small  Department of
				Mathematics, K M M Government\ Women's\ College, Kannur - 670004,\ Kerala,  \ India.  E-mail: shabrennen@gmail.com}
			Shijin T V \footnote{\small Department of Mathematics, Central University of Kerala, Kasaragod - 671316,\ Kerela,\ India.\ Email: shijintv11@gmail.com}
			Soorya P \footnote{\small  Department of Mathematics, Central University of Kerala, Kasaragod - 671316,\ Kerela,\ India.\ Email: sooryap2017@gmail.com}
			Germina K A \footnote{\small  Department of Mathematics, Central University of Kerala, Kasaragod - 671316,\ Kerela,\ India.\ Email: srgerminaka@gmail.com}
			Thomas Zaslavsky\footnote{Department of Mathematical Sciences, Binghamton University (SUNY), Binghamton, NY 13902-6000, U.S.A.  \textbf{e-mail:} {\tt zaslav@math.binghamton.edu}}
			
		}
\end{center}
\newcommand\spec{\operatorname{Spec}}
\thispagestyle{empty}
\begin{abstract}
Signed graphs have their edges labeled either as positive or negative.  Here we introduce two types of signed distance matrix for signed graphs. We characterize balance in signed graphs using these matrices and we obtain explicit formulae for the distance spectrum of some unbalanced signed graphs. We also introduce the notion of distance-compatible signed graphs and partially characterize it.
\end{abstract}

\textbf{Key Words:} Signed graph, Signed distance matrix, Signed distance spectrum, Signed distance compatibility.

\textbf{Mathematics Subject Classification (2010):}  Primary 05C12, Secondary 05C22, 05C50, 05C75.

\section{Introduction}
A signed graph $\Sigma=(G,\sigma)$ is an underlying graph $G=(V,E)$ with a signature function $\sigma:E\rightarrow \{1,-1\}$. 
In this paper we introduce two types of signed distance for signed graphs and corresponding distance matrices, and we characterize balanced signed graphs using these matrices. We also explicitly compute the distance spectrum of some unbalanced signed graphs. Finally, we introduce the concept of distance compatibility for signed graphs and we prove two theorems, one of which characterizes compatible signed bipartite graphs. 

All the underlying graphs in our consideration are simple and connected, unless otherwise stated.

 Given a signed graph $\Sigma=(G,\sigma)$, the sign of a path $P$ in $\Sigma$ is defined as $\sigma(P)=\prod_{e\in E(P)} \sigma(e)$. We denote a shortest path between two given vertices $u$ and $v$ by $P_{(u,v)}$ and the collection of all shortest paths $P_{(u,v)}$ by $\mathcal{P}_{(u,v)}$; and $d(u,v)$ denotes the usual distance between $u$ and $v$.

\begin{defn}[Signed distance matrices]
We define two distance matrices for the signed graph $\Sigma$.  First we define auxiliary signs:
\par(S1) $\sigma_{\max}(u,v) = -1$ if all shortest $uv$-paths are negative, and $+1$ otherwise.
 \par(S2) $\sigma_{\min}(u,v) = +1$ if all shortest $uv$-paths are positive, and $-1$ otherwise.

 Then we define signed distances:
\par(d1) $d_{\max}(u,v) = \sigma_{\max}(u,v) d(u,v)=\max\{\sigma(P_{(u,v)}): P_{(u,v)} \in \mathcal{P}_{(u,v)} \}d(u,v).$
\par(d2) $d_{\min}(u,v) = \sigma_{\min}(u,v) d(u,v)=\min\{\sigma(P_{(u,v)}): P_{(u,v)} \in \mathcal{P}_{(u,v)} \}d(u,v).$

Finally, we define the signed distance matrices:
\par(D1)  $D^{\max}(\Sigma)=(d_{\max}(u,v))_{n\times n}$.
\par(D2)  $D^{\min}(\Sigma)=(d_{\min}(u,v))_{n\times n}$.
\end{defn}

The following are some immediate observations:

(U1) If there is a unique shortest $uv$-path $P_{(u,v)}$ in $G$, then $\sigma_{\max}(u,v)=\sigma_{\min}(u,v)=\sigma(P_{(u,v)})$.
Thus, for an edge $e=uv$, $\sigma_{\max}(u,v)=\sigma_{\min}(u,v)=\sigma(e)$.

(U2) For an underlying graph in which any two vertices are joined by a unique shortest path (this is called a geodetic graph), we have $\sigma_{\max}(u,v)=\sigma_{\min}(u,v)=\sigma(P_{(u,v)})$ and consequently $D^{\max}(\Sigma) = D^{\min}(\Sigma)$.  Some examples are signed graphs with underlying graph that is $K_n$ or a tree.

\begin{defn}
Two vertices $u$ and $v$ in a signed graph $\Sigma$ are said to be \emph{distance-compatible} (briefly, \emph{compatible}) if $d_{min}{(u,v)}=d_{max}{(u,v)}$.  And $\Sigma$ is said to be (distance-)compatible if every two vertices are compatible.
\end{defn}

\section{Balance}\label{mainsection}
In this section we mainly give different characterizations of balance in signed graphs using the signed distance matrices defined above. A signed graph $\Sigma=(G,\sigma)$ is said to be \emph{balanced} if the product $\sigma(C)=\prod_{e\in E(C)}\sigma(e)=1$ for all cycles $C$ in $\Sigma$. 
It is called \emph{antibalanced} if $-\Sigma$ is balanced; equivalently, every even cycle is positive and every odd cycle is negative.
Switching is an important operation in signed graphs which will be used very often in our discussion. Given a signed graph $\Sigma=(G,\sigma)$, we can construct another signed graph $\Sigma^\zeta=(G,\sigma^\zeta)$ where $\zeta:V\rightarrow\{1,-1\}$ and $\sigma^\zeta(uv)=\zeta(u)\sigma(uv)\zeta(v)$. We say then that $\Sigma$ is switched to $\Sigma^\zeta$. 
The adjacency matrix  $A(\Sigma)$ of a signed graph $\Sigma$ of order $n$ is defined as the real, symmetric square matrix $(a_{ij})$ of order $n$ where
$$a_{ij} =
\begin{cases}
\sigma(v_iv_j)  & \mbox{if } v_i\sim v_j ,\\
0 & \mbox{otherwise, }
\end{cases}
$$
with $v_i\sim v_j$ denoting adjacency. 
The adjacency spectrum of a signed graph consists of the eigenvalues of the adjacency matrix, counting multiplicities. Usually we use the notation $\begin{pmatrix} \lambda_1 &\lambda_2 &\cdots & \lambda_k\\
\alpha_1&\alpha_2&\cdots& \alpha_k\end{pmatrix}$ to denote that the spectrum of a matrix contains each of the elements in the first row as eigenvalues with the corresponding multiplicities in the second row.

If all edges of $\Sigma$ are positive, then $A(\Sigma)=A(G)$.  Therefore, for the adjacency matrix and eigenvalues, we can consider an unsigned graph to be an all-positive signed graph.

Several characterizations of balance are already available in the literature and we list some that are used in our discussion as follows.
\begin{thm}[Harary's bipartition theorem \cite{balance}]\label{Harary}
	A signed graph $\Sigma$ is balanced if and only if there is a bipartition of its vertex set, $V = V_1 \cup V_2$, such that every positive edge is induced by $V_1$ or $V_2$ while every negative edge has one endpoint in $V_1$ and one in $V_2$.
\end{thm}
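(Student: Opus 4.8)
The plan is to establish the two implications separately, handling the ``bipartition $\Rightarrow$ balanced'' direction by a parity count and the converse by switching $\Sigma$ to an all-positive signed graph.

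For the easy direction, assume the bipartition $V = V_1 \cup V_2$ exists. Given any cycle $C$ in $\Sigma$, its negative edges are precisely the edges of $C$ that join $V_1$ to $V_2$. Traversing $C$ once starts and ends on the same side of the partition, so the number of times $C$ switches sides --- equivalently, the number $|E^-(C)|$ of negative edges on $C$ --- is even. Hence $\sigma(C) = (-1)^{|E^-(C)|} = 1$ for every cycle $C$, so $\Sigma$ is balanced.

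For the converse, suppose $\Sigma$ is balanced. Since $G$ is connected, fix a spanning tree $T$ and a root $r \in V$, and define $\zeta : V \to \{1,-1\}$ by $\zeta(r) = 1$ and $\zeta(v) = \sigma(P_v)$ for $v \neq r$, where $P_v$ is the unique $r$--$v$ path in $T$. Then every edge of $T$ becomes positive in $\Sigma^\zeta$. I claim that in fact \emph{every} edge of $G$ is positive in $\Sigma^\zeta$: a non-tree edge $e = uv$ lies on a unique cycle $C_e$ together with edges of $T$; since switching preserves the sign of each cycle and $\Sigma$ is balanced, $\sigma^\zeta(C_e) = \sigma(C_e) = 1$, while all $T$-edges on $C_e$ are positive in $\Sigma^\zeta$, forcing $\sigma^\zeta(e) = 1$. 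Thus $\Sigma^\zeta$ is all-positive. Now put $V_1 = \{v : \zeta(v) = 1\}$ and $V_2 = \{v : \zeta(v) = -1\}$. For any edge $uv$ we have $\sigma(uv) = \zeta(u)\,\sigma^\zeta(uv)\,\zeta(v) = \zeta(u)\zeta(v)$, so $\sigma(uv) = +1$ exactly when $u$ and $v$ lie in the same part and $\sigma(uv) = -1$ exactly when they lie in different parts, which is the required bipartition.

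The step I expect to be the crux is the assertion that switching does not change the sign of any cycle, since the whole converse rests on it. I would justify it by noting that along a cycle $C = v_0 v_1 \cdots v_{k-1} v_0$ each vertex weight $\zeta(v_i)$ occurs exactly twice in the product $\prod_i \zeta(v_i)\sigma(v_iv_{i+1})\zeta(v_{i+1})$, so all the $\zeta$'s cancel in pairs and $\sigma^\zeta(C) = \sigma(C)$. A minor point to record is that only the uniqueness of paths in the tree $T$ is used in defining $\zeta$, so no appeal to path-independence of signs in $G$ is needed; once that is in place the argument is just bookkeeping. Alternatively, one could skip the explicit tree and define $V_1, V_2$ directly from the signs of $r$--$v$ walks, but then one must first prove that all such walks carry the same sign, which is essentially the same cycle-sign fact in disguise.
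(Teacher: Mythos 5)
Your proof is correct. Note that the paper does not actually prove this statement --- it is quoted from Harary's 1953--54 paper as background, so there is no internal proof to compare against. Your argument is the standard one and is sound in both directions: the parity count for the easy implication works because a closed walk crosses any vertex bipartition an even number of times, and the converse via a spanning tree and the switching function $\zeta(v)=\sigma(P_v)$ is exactly the classical construction. One remark worth making: your converse direction in fact proves the switching criterion (Theorem~\ref{switching} in this paper) as an intermediate step, and then reads off the Harary bipartition as the level sets of the switching function; this makes transparent the equivalence of the two characterizations, which the paper treats as two separate cited facts. The only hypothesis you silently use is connectivity of $G$ (needed to build the spanning tree and to make $\zeta$ well defined from a single root), which the paper does assume globally; for a disconnected graph one would simply repeat the construction on each component.
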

A bipartition of $V$ as in Theorem \ref{Harary} is called a \emph{Harary bipartition} of $\Sigma$.
\begin{thm}[Harary's path criterion \cite{balance}]\label{Harary1}
	 A signed graph is balanced if and only if, for any two vertices $u$ and $v$, every $uv$-path has the same sign.
\end{thm}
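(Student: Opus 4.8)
The plan is to prove the two implications separately, using Harary's bipartition theorem (Theorem~\ref{Harary}) for the forward direction and an elementary cycle-splitting argument for the converse.

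\emph{Balanced implies the path condition.} Suppose $\Sigma$ is balanced and fix a Harary bipartition $V = V_1 \cup V_2$. For any $uv$-path $P = u_0u_1\cdots u_k$ with $u_0 = u$ and $u_k = v$, the sign $\sigma(P) = \prod_i \sigma(u_iu_{i+1})$ equals $(-1)^m$, where $m$ is the number of negative edges of $P$. By the defining property of a Harary bipartition, $u_iu_{i+1}$ is negative exactly when $u_i$ and $u_{i+1}$ lie in opposite parts, so $m$ counts the indices at which $P$ changes sides. A parity count on the sequence of parts containing $u_0, u_1, \dots, u_k$ shows that $m$ is even when $u$ and $v$ lie in the same part and odd otherwise; hence $\sigma(P)$ depends only on which parts contain $u$ and $v$, not on the choice of $P$. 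One may phrase the same computation via switching: a Harary bipartition yields $\zeta$ with $\Sigma^\zeta$ all-positive, and the telescoping identity $\sigma^\zeta(P) = \zeta(u)\,\sigma(P)\,\zeta(v)$ then forces $\sigma(P) = \zeta(u)\zeta(v)$, which again depends only on the endpoints.

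\emph{The path condition implies balanced.} Assume that for all $u, v$ every $uv$-path carries the same sign, and let $C$ be a cycle of $\Sigma$. Since $C$ has at least three vertices, pick two distinct vertices $u, v$ on $C$; they split $C$ into two internally disjoint $uv$-paths $P_1$ and $P_2$ whose edge sets partition $E(C)$. By hypothesis $\sigma(P_1) = \sigma(P_2)$, so $\sigma(C) = \sigma(P_1)\sigma(P_2) = \sigma(P_1)^2 = 1$. As $C$ was arbitrary, $\Sigma$ is balanced.

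Neither step is hard, so there is no serious obstacle here; the only places requiring a little care are (i) the parity-of-crossings bookkeeping in the forward direction, which is a one-line induction on the length of $P$ (or the telescoping switching identity), and (ii) noting in the converse that a cycle of a simple graph genuinely contains two distinct vertices, so that writing $\sigma(C)$ as $\sigma(P_1)^2$ is legitimate. It is also worth recording that, although the statement is phrased for paths, the same reasoning — replacing the cycle-split by the fact that the edges traversed an odd number of times by a closed walk form an edge-disjoint union of cycles — shows every closed \emph{walk} in a balanced signed graph is positive; only the path version is needed in the sequel.
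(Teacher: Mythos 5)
The paper does not prove this statement at all: it is quoted as Harary's classical path criterion and cited to \cite{balance}, so there is no in-paper argument to compare against. Your proof is correct and is the standard one. The forward direction legitimately leans on Theorem~\ref{Harary} (or, equivalently, on the switching criterion of Theorem~\ref{switching}); the telescoping identity $\sigma^\zeta(P)=\zeta(u)\,\sigma(P)\,\zeta(v)$ is the cleanest way to see that the sign of a path in a balanced signed graph depends only on its endpoints, and the parity-of-crossings count is an equivalent bookkeeping of the same fact. The converse, splitting an arbitrary cycle at two distinct vertices into two internally disjoint paths of equal sign so that $\sigma(C)=\sigma(P_1)\sigma(P_2)=1$, is exactly right; your remark that a cycle in a simple graph has at least three vertices disposes of the only degenerate case. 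The closing observation about closed walks is a true and harmless strengthening, though, as you note, only the path version is used in the paper. No gaps.
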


For the main result about balance, Theorem \ref{Dbal}, we need Acharya's main theorem \cite{gbda}.  Since the proof can be made short, we restate it; that requires the weighted generalization of the Sachs formula for the characteristic polynomial of an adjacency matrix.  We assume the weight function $w$ on $E$ has values in a field but never takes the value $0$.  For a subgraph $B\subseteq G$, we define its weight as $w(B) = \prod_{e \in E(B)} w(e)$.  The adjacency matrix has the edge weight $w(uv)$ as the $(u,v)$ and $(v,u)$ elements and $0$ elsewhere; the characteristic polynomial of $(G,w)$ is defined as $\phi(G,w;\lambda) = \det(\lambda I - A(G,w))$.  An \emph{elementary subgraph} of $G$ is a subgraph that is a disjoint union of cycles and independent edges.  Let $\bfB_k(G)$ be the set of elementary subgraphs of order $k$.  We need a weighted Sachs formula, which can be found, e.g., in \cite{gbda, spec1}.  For an elementary graph $B$, let $B_2$ denote its set of isolated edges.

\begin{thm}[Weighted Sachs formula]\label{sachs}
Let $(G,w)$ be a weighted graph.  The characteristic polynomial $\phi(G,w;\lambda) = \sum_{k=0}^n a_{n-k}\lambda^k$ has coefficients given by 
$$
a_{n-k} = \sum_{B \in \bfB_k(G)} (-1)^{\kappa(B)} 2^{c(B)} w(B)w(B_2),
$$
where $\kappa(B)$ denotes the number of components and $c(B)$ denotes the number of cycles in $B$.
\end{thm}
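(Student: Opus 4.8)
The plan is to prove Theorem~\ref{sachs} along the classical route for the Sachs coefficient theorem, carried through with edge weights. Write $A=A(G,w)$, so that $\phi(G,w;\lambda)=\det(\lambda I-A)$. Expanding this determinant by multilinearity in the columns (equivalently, via the standard principal-minor expansion of a characteristic polynomial) gives
\[
\det(\lambda I - A) \;=\; \sum_{S\subseteq V} (-1)^{|S|}\,\lambda^{\,n-|S|}\,\det\bigl(A[S]\bigr),
\]
where $A[S]$ is the principal submatrix on the rows and columns indexed by $S$ and the empty minor is $1$. Collecting the terms with $|S|=k$, the coefficient of $\lambda^{\,n-k}$ equals $(-1)^k\sum_{|S|=k}\det(A[S])$, so it suffices to evaluate each minor $\det(A[S])$ combinatorially.

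Fix $S$ with $|S|=k$ and expand $\det(A[S])=\sum_{\pi}\operatorname{sgn}(\pi)\prod_{i\in S}a_{i\,\pi(i)}$ over permutations $\pi$ of $S$. Because $A$ has zero diagonal, only fixed-point-free $\pi$ survive, and a summand is nonzero only when $i$ is adjacent to $\pi(i)$ in $G$ for every $i\in S$. Decomposing such a $\pi$ into disjoint cyclic factors, each transposition $(u\,v)$ corresponds to the edge $uv$ and each cyclic factor of length at least $3$ traces out a cycle of $G$; together they determine an elementary subgraph $B$ with vertex set exactly $S$, hence $B\in\bfB_k(G)$. Conversely, a given $B\in\bfB_k(G)$ with $V(B)=S$ is realized by exactly $2^{c(B)}$ permutations $\pi$, since each of its $c(B)$ cycles may be traversed in either of two directions while each isolated edge forces its unique transposition.

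It remains to bookkeep the sign and the weight of each realizing $\pi$. A permutation $\pi$ realizing $B$ has $\kappa(B)$ cyclic factors whose lengths sum to $k$, so $\operatorname{sgn}(\pi)$ is the product of the factor signs $(-1)^{\ell-1}$ over these factors of lengths $\ell$, which equals $(-1)^{k-\kappa(B)}$; multiplying by the prefactor $(-1)^k$ leaves $(-1)^{\kappa(B)}$. For the weight, a transposition $(u\,v)$ contributes $a_{uv}a_{vu}=w(uv)^2$, whereas a cyclic factor running once around a cycle $C$ contributes $\prod_{e\in E(C)}w(e)=w(C)$; taking the product over all cyclic factors of $\pi$ yields precisely $w(B)\,w(B_2)$, because each isolated edge of $B$ is counted twice and each cycle edge once. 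Summing over all $S$ with $|S|=k$ and all realizing $\pi$ then gives
\[
(-1)^k\sum_{|S|=k}\det(A[S]) \;=\; \sum_{B\in\bfB_k(G)} (-1)^{\kappa(B)}\,2^{c(B)}\,w(B)\,w(B_2),
\]
which is the asserted value of the coefficient.

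The argument is mostly bookkeeping and follows the unsigned Sachs theorem almost line for line; the one step deserving genuine care is the weight identity in the last paragraph, namely the observation that an isolated edge is traversed in both directions and therefore contributes $w(e)^2$, while a cycle is traversed only once. This asymmetry is exactly what forces the correction factor $w(B_2)$, rather than a plain $w(B)$, to appear, and it is the only feature that distinguishes the weighted statement from the classical one.
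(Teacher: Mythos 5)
Your proof is correct and follows exactly the route the paper intends: the paper only gives an ``indication of proof'' by deferring to the classical unweighted Sachs argument in Biggs, and your write-up is that argument carried through with weights, including the one genuinely new point (an isolated edge is traversed in both directions, contributing $w(e)^2$ and hence the factor $w(B_2)$). No discrepancies; you have simply supplied the details the paper omits.
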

\begin{proof}[Indication of Proof]
The proof is similar to that of the unweighted Sachs formula, as for example in \cite[Propositions 7.2, 7.3]{biggs}.
\end{proof}

\begin{thm}[{Restatement of Acharya \cite[Theorem 1]{gbda}}]\label{wtbal}
Let $\Sigma$ be a signed graph with underlying graph $G$ and let $w$ be a positive weight function on the edges.  Then $(\Sigma,w)$ is cospectral with $(G,w)$ if and only if\/ $\Sigma$ is balanced.
\end{thm}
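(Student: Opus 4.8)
The plan is to prove the two implications separately, the reverse one (balance $\Rightarrow$ cospectrality) by switching and the forward one (cospectrality $\Rightarrow$ balance) by the weighted Sachs formula. For the reverse implication, suppose $\Sigma$ is balanced. By Harary's bipartition theorem (Theorem~\ref{Harary}) there is a switching function $\zeta\colon V\to\{1,-1\}$ with $\Sigma^\zeta$ all-positive, so $A(\Sigma^\zeta,w)=A(G,w)$. Writing $D_\zeta=\diag(\zeta(v_1),\dots,\zeta(v_n))$ one has $A(\Sigma^\zeta,w)=D_\zeta A(\Sigma,w)D_\zeta$ with $D_\zeta^{2}=I$, so $A(\Sigma,w)$ and $A(G,w)$ are similar, hence cospectral (switching alters only the signs, not the weights).

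For the forward implication I would argue by contraposition. First observe that $A(\Sigma,w)=A(G,\sigma w)$ with $(\sigma w)(e)=\sigma(e)w(e)$, and that for any elementary subgraph $B$ we have $(\sigma w)(B)=\sigma(B)w(B)$ and $(\sigma w)(B_2)=\sigma(B_2)w(B_2)$. Applying the weighted Sachs formula (Theorem~\ref{sachs}) to $(G,w)$ and to $(\Sigma,w)$ and subtracting coefficient by coefficient gives
$$
a_{n-k}(G,w)-a_{n-k}(\Sigma,w)=\sum_{B\in\bfB_k(G)}(-1)^{\kappa(B)}2^{c(B)}w(B)w(B_2)\bigl(1-\sigma(B)\sigma(B_2)\bigr).
$$
Since an elementary subgraph is a vertex-disjoint union of cycles together with its isolated-edge set $B_2$, we have $\sigma(B)\sigma(B_2)=\prod_{C}\sigma(C)$ over the cycles $C$ of $B$; this equals $1$ unless $B$ contains an odd number of negative cycles, in which case the bracket equals $2$.

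Now assume $\Sigma$ is unbalanced and let $\ell$ be the smallest length of a negative cycle of $\Sigma$. If $B\in\bfB_k(G)$ with $k<\ell$, every cycle in $B$ has length at most $k<\ell$ and is therefore positive, so the bracket vanishes and $a_{n-k}(G,w)=a_{n-k}(\Sigma,w)$. For $k=\ell$, any elementary subgraph containing a negative cycle must contain one of length exactly $\ell$, which already uses all $\ell$ vertices and so is all of $B$; hence the only nonzero terms come from the negative $\ell$-cycles $C$ of $G$, each contributing $(-1)^{1}2^{1}w(C)\cdot 1\cdot 2=-4w(C)$. Therefore
$$
a_{n-\ell}(G,w)-a_{n-\ell}(\Sigma,w)=-4\sum_{C}w(C),
$$
the sum being over negative $\ell$-cycles. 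Because $w$ is positive, each $w(C)>0$ and there is at least one such $C$, so the right-hand side is strictly negative; thus $(\Sigma,w)$ is not cospectral with $(G,w)$, which establishes the contrapositive.

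I expect the main (indeed essentially the only delicate) point to be the combinatorial observation that at the minimal order $\ell$ the only elementary subgraphs with an odd number of negative cycles are the negative $\ell$-cycles themselves, so that the positivity of $w$ prevents any cancellation among their Sachs contributions. Everything else is bookkeeping with Theorem~\ref{sachs} and the switching identity.
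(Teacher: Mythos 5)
Your proof is correct and follows essentially the same route as the paper's: switching gives cospectrality in the balanced case, and in the unbalanced case the weighted Sachs formula shows that at the minimal negative-cycle length $\ell$ the only contributing elementary subgraphs are the negative $\ell$-cycles themselves, whose contributions cannot cancel because $w>0$. Your write-up is in fact slightly more careful with the signs in the coefficient difference than the paper's version.
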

\begin{proof}
We supply a concise restatement of Acharya's proof.  

If $\Sigma$ is balanced, it can be switched to be all positive; thus we may assume $\Sigma=G$ without changing the spectrum, from which it follows that $(\Sigma,w)$ and $(G,w)$ are cospectral.

Suppose $\Sigma$ is unbalanced.  By assumption there exists a negative cycle.  
The smallest order, $k$, of a negative cycle is the smallest order of a negative elementary subgraph, and all negative elementary subgraphs of order $k$ are cycles.  Hence, in the Sachs formulae the term of a negative elementary subgraph of order $k$ in $a_{n-k}(G,w)$ is $-2w(B)$ and in $a_{n-k}(\Sigma,w)$ it is the negative, $2w(B)$.  Comparing the coefficients, we see that
$$
a_{n-k}(G,w) - a_{n-k}(\Sigma,w) = -4 \sum_{B} (-1)^{\kappa(B)}  w(B),
$$
summed over negative $k$-cycles of $\Sigma$.  This difference is nonzero because all terms $w(B)$ are positive; therefore $a_{n-k}(\Sigma,w) > a_{n-k}(G,w)$, from which it follows that the matrices $A(\Sigma,w)$ and $A(G,w)$ have different characteristic polynomials.  Therefore, they are not cospectral.
\end{proof}

\begin{cor}[{Acharya's spectral criterion \cite[Corollary 1.1]{gbda}}]\label{Acharya}
A signed graph $\Sigma=(G,\sigma)$ is balanced if and only if the spectra of the adjacency matrices of $\Sigma$ and $G$ coincide.
\end{cor}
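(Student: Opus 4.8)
The plan is to obtain this as the special case of Theorem~\ref{wtbal} in which the weight function is constant. Take $w\equiv 1$ on $E(G)$; this is a positive weight function, so Theorem~\ref{wtbal} applies to $(\Sigma,w)$ and $(G,w)$. For this choice the weighted adjacency matrix $A(G,w)$ is literally the ordinary adjacency matrix $A(G)$ (using the convention, already noted in the excerpt, that an unsigned graph is the all-positive signed graph), and likewise $A(\Sigma,w)=A(\Sigma)$, since each entry $w(v_iv_j)\sigma(v_iv_j)=\sigma(v_iv_j)$. Hence ``$(\Sigma,w)$ and $(G,w)$ are cospectral'' is exactly the assertion that $A(\Sigma)$ and $A(G)$ have the same spectrum.

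With this identification in hand, Theorem~\ref{wtbal} reads: $A(\Sigma)$ and $A(G)$ are cospectral if and only if $\Sigma$ is balanced. That is precisely the statement of the corollary, so the proof is complete once the reduction is spelled out.

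I do not expect any genuine obstacle here: all the substantive work is already done in Theorem~\ref{wtbal} (and, behind it, the weighted Sachs formula, Theorem~\ref{sachs}); the corollary is just the unit-weight instance. The only points requiring a moment's care are checking that the hypothesis ``$w$ positive'' is satisfied --- which is trivial for $w\equiv 1$ --- and confirming from the definitions given just before Theorem~\ref{sachs} that $\phi(G,w;\lambda)$ and $A(G,w)$ really do collapse to the usual characteristic polynomial and adjacency matrix of $G$ when $w\equiv 1$, which is immediate.
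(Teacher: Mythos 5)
Your proposal is correct and matches the paper's (implicit) argument exactly: the corollary is stated immediately after Theorem~\ref{wtbal} with no separate proof, precisely because it is the unit-weight specialization $w\equiv 1$ that you spell out. Nothing further is needed.
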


A beautiful strengthening was recently proved by Stani\'c.

\begin{thm}[Stani\'c's spectral criterion {\cite[Lemma 2.1]{stanic}}]\label{Stanic}
A signed graph $\Sigma=(G,\sigma)$ is balanced if and only if the largest eigenvalues of the adjacency matrices of $\Sigma$ and $G$ coincide.
\end{thm}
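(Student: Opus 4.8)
The plan is to handle the trivial direction by switching and the substantive direction by a Perron--Frobenius / equality-in-Rayleigh-quotient argument. If $\Sigma$ is balanced, then by Harary's bipartition theorem (Theorem~\ref{Harary}) there is a $\zeta:V\to\{1,-1\}$ with $\Sigma^\zeta$ all positive, i.e.\ $A(\Sigma^\zeta)=A(G)$. Since $A(\Sigma^\zeta)=D_\zeta A(\Sigma)D_\zeta$ with $D_\zeta=\diag(\zeta(v_1),\dots,\zeta(v_n))=D_\zeta^{-1}$, switching is a similarity and $\Sigma$ is cospectral with $G$; in particular their largest eigenvalues coincide.

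For the converse, I would first record the elementary bound $\lambda_{\max}(\Sigma)\le\lambda_{\max}(G)$. As $A(\Sigma)$ is real symmetric, choose a real unit eigenvector $x$ with $A(\Sigma)x=\lambda_{\max}(\Sigma)x$; writing $|x|$ for the entrywise absolute value and using $|\sigma(v_iv_j)|=1$ on edges,
\[
\lambda_{\max}(\Sigma)=x^{T}A(\Sigma)x=\sum_{v_i\sim v_j}\sigma(v_iv_j)\,x_ix_j\le\sum_{v_i\sim v_j}|x_i||x_j|=|x|^{T}A(G)|x|\le\lambda_{\max}(G).
\]
Now suppose $\lambda_{\max}(\Sigma)=\lambda_{\max}(G)$. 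Then both inequalities are equalities. From $|x|^{T}A(G)|x|=\lambda_{\max}(G)\||x|\|^2$ we get that $|x|$ is a top eigenvector of $A(G)$; since $G$ is connected, $A(G)$ is irreducible and nonnegative, so by Perron--Frobenius $\lambda_{\max}(G)$ is simple with a strictly positive eigenvector, forcing $|x|>0$, i.e.\ $x_i\ne 0$ for all $i$. From the first equality, comparing the sums term by term gives $\sigma(v_iv_j)x_ix_j=|x_i||x_j|>0$ for every edge $v_iv_j$. Setting $\zeta(v_i)=\operatorname{sgn}(x_i)\in\{1,-1\}$, we obtain $\sigma^\zeta(v_iv_j)=\zeta(v_i)\sigma(v_iv_j)\zeta(v_j)=\operatorname{sgn}(\sigma(v_iv_j)x_ix_j)=+1$ on every edge, so $\Sigma^\zeta$ is all positive; since balance is switching-invariant, $\Sigma$ is balanced.

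The only delicate point — the main obstacle — is the equality analysis: one must use irreducibility of $A(G)$ (equivalently, connectedness of $G$) to force the Perron eigenvector to be \emph{strictly} positive, which is exactly what makes $\zeta=\operatorname{sgn}(x)$ well-defined at every vertex. The rest is routine bookkeeping with the Rayleigh quotient, and notably the argument uses neither Acharya's criterion (Corollary~\ref{Acharya}) nor the Sachs formula, which is precisely the sense in which this is a strengthening.
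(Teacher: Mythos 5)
Your proof is correct. Note that the paper itself supplies no proof of Theorem~\ref{Stanic}: it is quoted from Stani\'c's paper \cite[Lemma 2.1]{stanic}, so there is no in-paper argument to compare against. Your Rayleigh-quotient/Perron--Frobenius argument is the standard route to this result, and every step checks out: the chain $\lambda_{\max}(\Sigma)=x^TA(\Sigma)x\le |x|^TA(G)|x|\le\lambda_{\max}(G)$ is valid; equality forces $|x|$ into the top eigenspace of $A(G)$, which by irreducibility (connectedness of $G$, an assumption the paper does make throughout) is spanned by a strictly positive Perron vector, so $|x|>0$; and termwise equality on edges makes $\zeta=\operatorname{sgn}(x)$ a switching function rendering $\Sigma$ all positive, whence balance by Theorem~\ref{switching}. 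One small presentational point: you invoke Harary's bipartition theorem for the easy direction, but the switching criterion (Theorem~\ref{switching}) is the more direct citation, and you use it anyway at the end. A genuine virtue of writing the argument out as you did is that it visibly generalizes to a positive weight function $w$ --- replace $|x_i||x_j|$ by $w(v_iv_j)|x_i||x_j|$ throughout --- which is exactly the claim the paper asserts without detail in Corollary~\ref{C:Stanic}; your write-up effectively supplies the missing justification for that corollary as well.
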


\begin{cor}\label{C:Stanic}
A signed graph with positively weighted edges, $(\Sigma,w)=(G,\sigma,w)$, is balanced if and only if the largest eigenvalues of the adjacency matrices $A(\Sigma,w)$ and $A(G,w)$ coincide.
\end{cor}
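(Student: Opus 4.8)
The plan is to prove both directions with the Perron--Frobenius theorem, essentially by observing that Stani\'c's argument for Theorem~\ref{Stanic} goes through once the edge weights are positive. We may assume $G$ has at least one edge, the other case being trivial. For the ``only if'' direction, if $\Sigma$ is balanced then Theorem~\ref{wtbal} already gives that $(\Sigma,w)$ and $(G,w)$ are cospectral, so in particular their largest eigenvalues coincide; equivalently, switching $\Sigma$ to an all-positive signed graph by some $\zeta\colon V\to\{1,-1\}$ yields $A(\Sigma,w)=D_\zeta A(G,w)D_\zeta$ with $D_\zeta=\diag(\zeta(v_1),\dots,\zeta(v_n))$, a $\pm1$ diagonal similarity, hence cospectrality.

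For the ``if'' direction I would set $M=A(\Sigma,w)$ and $N=A(G,w)$ and record three facts: $N$ is entrywise nonnegative, $N$ equals the entrywise absolute value $|M|$, and $N$ is irreducible since $G$ is connected. The Perron--Frobenius theorem then makes $\lambda_{\max}(N)=\rho(N)$ a simple eigenvalue with a strictly positive eigenvector. Next I would run the standard Rayleigh-quotient comparison: for a unit $\lambda_{\max}(M)$-eigenvector $x$,
\[
\lambda_{\max}(M)=\sum_{i,j} M_{ij}x_ix_j \;\le\; \sum_{i,j}|M_{ij}|\,|x_i|\,|x_j| \;=\; |x|^{\top}N|x| \;\le\; \lambda_{\max}(N),
\]
and the hypothesis forces equality throughout. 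From equality of the outer terms, $|x|$ is a unit $\lambda_{\max}(N)$-eigenvector of $N$, so by simplicity it is the Perron vector and $|x_i|>0$ for every $i$. From the resulting termwise equality $M_{ij}x_ix_j=|M_{ij}|\,|x_i|\,|x_j|$ I would conclude $\sigma(v_iv_j)\,w(v_iv_j)\,x_ix_j\ge0$ on each edge; since $w>0$ and no coordinate of $x$ vanishes, $\zeta(v_i):=\operatorname{sgn}(x_i)$ is well defined and satisfies $\sigma^{\zeta}(v_iv_j)=\zeta(v_i)\sigma(v_iv_j)\zeta(v_j)=+1$ for every edge. Hence $\Sigma^{\zeta}$ is all-positive and $\Sigma$ is balanced.

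The hard part is the equality analysis, and within it the \emph{key point} is that a $\lambda_{\max}$-eigenvector of $A(\Sigma,w)$ cannot have a zero coordinate --- this is exactly where connectedness (irreducibility of $A(G,w)$) and simplicity of the Perron eigenvalue are used, and it is what lets the switching function $\zeta$ be defined on all of $V$. Positive weights enter in two places: to guarantee $A(G,w)\ge 0$ so that Perron--Frobenius is available, and to upgrade $\sigma(v_iv_j)w(v_iv_j)x_ix_j\ge0$ to $\sigma(v_iv_j)\zeta(v_i)\zeta(v_j)=+1$. Beyond this I expect only routine bookkeeping; the corollary is really the statement that the proof of Theorem~\ref{Stanic} is weight-agnostic as long as the weights stay positive.
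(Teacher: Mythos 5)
Your proof is correct and is essentially the argument the paper has in mind: the paper's own ``proof'' is the one-line remark that Stani\'c's argument generalizes directly to positive weights, and your Perron--Frobenius/Rayleigh-quotient analysis (with the switching function $\zeta=\operatorname{sgn}(x)$ extracted from the termwise equality) is precisely that generalization written out in full. The only cosmetic point is that on an edge the termwise equality actually gives $\sigma(v_iv_j)w(v_iv_j)x_ix_j=w(v_iv_j)|x_i||x_j|>0$, strict positivity, which is what you use anyway.
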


\begin{proof}
This is a corollary of the proof in \cite{stanic}, which generalizes directly.
\end{proof}

We can even weaken this to $\rho(A(\Sigma,w)) \geq \rho(A(G,\sigma,w))$, since $\leq$ is automatic due to positivity of the weights.

\begin{thm}[Switching criterion \cite{tz1}]\label{switching} A signed graph $\Sigma=(G,\sigma)$ is balanced if and only if it can be switched to an all positive signed graph.	
\end{thm}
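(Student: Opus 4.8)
The plan is to prove the two implications separately, the key fact being that switching leaves the sign of every cycle unchanged.

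\emph{If $\Sigma$ switches to an all-positive signed graph.} Suppose $\Sigma^\zeta$ is all positive for some $\zeta\colon V\to\{1,-1\}$. For any cycle $C=v_1v_2\cdots v_kv_1$ in $G$, each vertex $v_i$ is incident with exactly two edges of $C$, so
\[
\sigma^\zeta(C)=\prod_{i=1}^{k}\zeta(v_i)\sigma(v_iv_{i+1})\zeta(v_{i+1})
=\Bigl(\prod_{i=1}^{k}\zeta(v_i)^2\Bigr)\sigma(C)=\sigma(C),
\]
indices read modulo $k$. Since every cycle is positive in the all-positive graph $\Sigma^\zeta$, we get $\sigma(C)=\sigma^\zeta(C)=1$ for every cycle $C$, i.e.\ $\Sigma$ is balanced.

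\emph{If $\Sigma$ is balanced.} By Harary's bipartition theorem (Theorem~\ref{Harary}) there is a partition $V=V_1\cup V_2$ with every positive edge inside one of the parts and every negative edge joining the two parts. Define $\zeta(v)=+1$ for $v\in V_1$ and $\zeta(v)=-1$ for $v\in V_2$. For an edge $uv$: if $\sigma(uv)=+1$ then $u,v$ lie in the same part, so $\zeta(u)\zeta(v)=1$ and $\sigma^\zeta(uv)=1$; if $\sigma(uv)=-1$ then $u,v$ lie in different parts, so $\zeta(u)\zeta(v)=-1$ and $\sigma^\zeta(uv)=(-1)(-1)=1$. Hence $\Sigma^\zeta$ is all positive.

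There is no serious obstacle here; the only points requiring care are the cycle-sign bookkeeping in the first implication (ensuring the squared terms $\zeta(v_i)^2$ genuinely cancel) and invoking a correct balance characterization for the converse. If one prefers a proof independent of Theorem~\ref{Harary}, one can instead fix a spanning tree $T$ of the connected graph $G$, root it, and define $\zeta$ by working outward from the root so that every tree edge becomes positive; balance then forces each non-tree edge $uv$ to become positive as well, since by Harary's path criterion (Theorem~\ref{Harary1}) the fundamental cycle of $uv$ in $T$ is positive while all of its tree edges are now positive.
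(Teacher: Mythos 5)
Your proof is correct. The paper itself gives no proof of this theorem---it is quoted from the cited reference \cite{tz1}---so there is nothing to compare against, but both of your directions are sound: the cancellation $\zeta(v_i)^2=1$ around a cycle correctly shows switching preserves cycle signs, and the switching function built from a Harary bipartition (with one part possibly empty) correctly produces an all-positive graph; the spanning-tree alternative you sketch is the other standard route and avoids any reliance on Theorem~\ref{Harary}.
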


Recall that we assume the signed graphs are connected.  We construct two complete signed graphs from the distance matrices $D^{\max}$ and $D^{\min}$ as follows.
\begin{defn}
	The associated signed complete graph $K^{D^{\max}}(\Sigma)$ with respect to $D^{\max}(\Sigma)$ is obtained by  joining the non-adjacent vertices of $\Sigma$ with edges having signs 
	\begin{equation*}
	\sigma(uv)= \sigma_{\max}(uv)
	\end{equation*}

	The associated signed complete graph $K^{D^{\min}}(\Sigma)$ with respect to $D^{\min}(\Sigma)$ is obtained by joining the non-adjacent vertices of $\Sigma$ with edges having signs 
	\begin{equation*}
	\sigma(uv)= \sigma_{\min}(uv)
	\end{equation*}
\end{defn}

It is easy to check that $\Sigma$ is compatible if and only if $-\Sigma$ is compatible.  More generally, $\Sigma$ and $-\Sigma$ have the same pairs of compatible vertices. It follows that every antibalanced graph is compatible.
Thus, we have three kinds of compatible signed graphs: balanced, antibalanced, and geodetic.
We have the following situation.  All connected signed graphs fall into three classes:
\begin{itemize}
	\item [(I)] $\Sigma$ is balanced or antibalanced or geodetic, which makes $D^{\max}=D^{\min}$ (see Theorem~\ref{gen}).
	\item [(II)] $\Sigma$ is unbalanced and neither antibalanced nor geodetic, but still $D^{\max}=D^{\min}$.
	\item [(III)] $\Sigma$ is unbalanced and neither antibalanced nor geodetic, with $D^{\max}\neq D^{\min}$.
\end{itemize}
 In the first two cases, we denote the associated signed complete graph by $K^{D^{\pm}}(\Sigma)$ or simply by $K^{D^{\pm}}$.

 A small example for case (II) is the triangle with one negative edge. 
As examples for all three cases consider the signed graph $K_4 \setminus \{e\}$ with three different sign patterns as given in Figure \ref{Fthree}.  Note that this graph is not geodetic.

\begin{figure}[h]
	\centering
	\includegraphics[width=14cm]{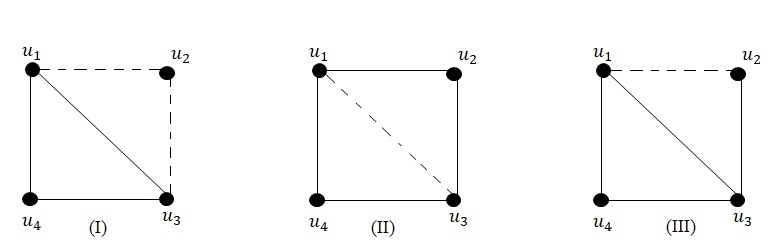}
	\caption{Three different kinds of signature of $K_4 \setminus \{e\}$.}
	\label{Fthree}
\end{figure}

The signed graph (I) in Figure \ref{Fthree} is balanced and has $D^{\max}=D^{\min}$; the matrix is
$$D^{\max}=D^{\min}=\begin{pmatrix}
0 & -1 & 1 & 1\\
-1 & 0 & -1 &-2\\
1 & -1 & 0 & 1\\
1 & -2 & 1 & 0
\end{pmatrix}$$
The signed graph (II) in Figure \ref{Fthree} is neither balanced nor antibalanced, yet $D^{\max}=D^{\min}$, as follows:
 $$D^{\max}=D^{\min}=\begin{pmatrix}
0 & 1 & -1 & 1\\
1 & 0 & 1 &2\\
-1 & 1 & 0 & 1\\
1 & 2 & 1 & 0
\end{pmatrix}.$$
The signed graph (III) in Figure \ref{Fthree} is of type (III), where $D^{\max}\ne D^{\min}$. The two matrices are
 $$D^{\max}=\begin{pmatrix}
0 & -1 & 1 & 1\\
-1 & 0 & 1 & 2\\
1 & 1 & 0 & 1\\
1 & 2 & 1 & 0
\end{pmatrix},
\qquad
D^{\min}=\begin{pmatrix}
0 & -1 & 1 & 1\\
-1 & 0 & 1 & -2\\
1 & 1 & 0 & 1\\
1 & -2 & 1 & 0
\end{pmatrix}.$$

Figure \ref{Figuren5} shows another simple, $2$-connected, compatible signed graph that is neither geodetic nor balanced nor antibalanced.%
\begin{figure}[h]
	\centering
	\includegraphics[width=3.5cm]{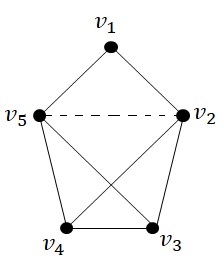}
	\caption{A nontrivially distance-compatible signed graph of order $5$.}
	\label{Figuren5}
\end{figure}

\section{Some new characterizations for balance}
\begin{thm} \label{gen}
	For a signed graph $\Sigma$ the following statements are equivalent:
	\begin{enumerate}%[\rm{(I)}]
	\item [\rm{(i)}] $\Sigma$ is balanced.
	\item [\rm{(ii)}] The associated signed complete graph $K^{D^{\max}}(\Sigma)$ is balanced.
	\item [\rm{(iii)}] The associated signed complete graph $K^{D^{\min}}(\Sigma)$ is balanced.
	\item [\rm{(iv)}] $D^{\max}(\Sigma)=D^{\min}(\Sigma)$ and the associated signed complete graph $K^{D^{\pm}}(\Sigma)$ is balanced.
	\end{enumerate}	
\end{thm}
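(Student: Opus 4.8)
The plan is to prove the cycle of implications (i) $\Rightarrow$ (ii), (i) $\Rightarrow$ (iii), and then observe (ii) $\Rightarrow$ (iv) and (iii) $\Rightarrow$ (iv) hold essentially by the same argument, with (iv) $\Rightarrow$ (i) being immediate. The heart of the matter is the implication from balance of $\Sigma$ to balance of the associated signed complete graphs, so I will concentrate there.

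First I would invoke Harary's path criterion (Theorem \ref{Harary1}): since $\Sigma$ is balanced, for every pair of vertices $u,v$ all $uv$-paths have the same sign, and in particular all shortest $uv$-paths have the same sign, call it $\epsilon(u,v)$. Hence $\sigma_{\max}(u,v)=\sigma_{\min}(u,v)=\epsilon(u,v)$ for every pair, which already gives $D^{\max}(\Sigma)=D^{\min}(\Sigma)$ and shows that $K^{D^{\max}}(\Sigma)=K^{D^{\min}}(\Sigma)=K^{D^{\pm}}(\Sigma)$. It remains to show this common signed complete graph is balanced. The clean way is the switching criterion (Theorem \ref{switching}): because $\Sigma$ is balanced, it switches to the all-positive signature via some $\zeta:V\to\{1,-1\}$. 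I claim the \emph{same} switching function makes $K^{D^{\pm}}(\Sigma)$ all positive. Indeed, for any vertices $u,v$, pick a shortest $uv$-path $P=v_0v_1\cdots v_d$ in $\Sigma$; then $\epsilon(u,v)=\sigma(P)=\prod_{i} \sigma(v_{i-1}v_i)$, and since each edge satisfies $\zeta(v_{i-1})\sigma(v_{i-1}v_i)\zeta(v_i)=+1$, multiplying along the path telescopes to $\zeta(u)\sigma(P)\zeta(v)=+1$, i.e. $\zeta(u)\epsilon(u,v)\zeta(v)=+1$. Since the edge $uv$ of $K^{D^{\pm}}(\Sigma)$ carries sign $\epsilon(u,v)$, it becomes positive under $\zeta$. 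Thus $K^{D^{\pm}}(\Sigma)$ switches to all-positive and is balanced, proving (i) $\Rightarrow$ (ii), (i) $\Rightarrow$ (iii), and also (i) $\Rightarrow$ (iv).

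For the converse directions, note that $\Sigma$ is a subgraph of each of $K^{D^{\max}}(\Sigma)$, $K^{D^{\min}}(\Sigma)$, and $K^{D^{\pm}}(\Sigma)$ obtained by adding edges only between non-adjacent vertices, and by observation (U1) the edges of $\Sigma$ retain their original signs in all three. Since a signed graph is balanced only if every subgraph is balanced (a cycle of $\Sigma$ is also a cycle of the complete graph), balance of any of $K^{D^{\max}}(\Sigma)$, $K^{D^{\min}}(\Sigma)$, $K^{D^{\pm}}(\Sigma)$ forces balance of $\Sigma$. This gives (ii) $\Rightarrow$ (i), (iii) $\Rightarrow$ (i), and (iv) $\Rightarrow$ (i) at once, closing the equivalence.

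The one point requiring a little care — the main obstacle, such as it is — is the telescoping step showing that the switching function that trivializes $\Sigma$ also trivializes the added long-distance edges; this uses crucially that in a balanced signed graph the sign $\epsilon(u,v)$ is a genuine path invariant (Theorem \ref{Harary1}), so it does not matter which shortest path is chosen and the value is consistent with switching. Once that is in hand everything else is bookkeeping. I would present the argument in the order: (a) balance $\Rightarrow$ $\sigma_{\max}=\sigma_{\min}$ everywhere, hence the three complete graphs coincide; (b) the switching function argument for balance of $K^{D^{\pm}}(\Sigma)$; (c) the easy subgraph argument for all converses.
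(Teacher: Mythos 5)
Your proof is correct and takes essentially the same route as the paper's: the forward direction transfers the balancing structure of $\Sigma$ to the associated complete graph (you via a switching function and a telescoping product along a shortest path, the paper via the equivalent Harary bipartition), and the converse is the same subgraph argument. Your telescoping step in fact makes explicit the verification that the paper leaves implicit when it asserts that $V_1,V_2$ remain a Harary bipartition of $K^{D^{\pm}}(\Sigma)$.
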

\begin{proof}
	Suppose that $\Sigma=(G,\sigma)$ is a balanced signed graph. Then, there exists a Harary bipartition $V_1$ and $V_2$ of the vertex set $V(\Sigma)$. Let $u,v$ be any two vertices of $\Sigma$, then using Theorem~\ref{Harary1}, $D^{\max}(\Sigma)=D^{\min}(\Sigma)$.  Thus, $D^\pm(\Sigma)$ is well defined.
	Now to obtain the associated signed complete graph $K^{D^{\pm}},$ we join the non-adjacent vertices of $\Sigma$ with signs $\sigma(uv)= \sigma_{\max}(uv)= \sigma_{\min}(uv)$.   Then $V_1$ and $V_2$ will form a Harary bipartition of $K^{D^{\pm}}$. Thus, $K^{D^{\pm}}$ is balanced, per Theorem~\ref{Harary1}.
	
	 Conversely, suppose that $D^{\max}(\Sigma)$ and the associated signed complete graph $K^{D^{\max}}$ is balanced. Then $\Sigma$, being a subgraph of $K^{D^{\max}},$  is balanced.  Similarly, balance of $K^{D^{\min}}$ implies balance of $\Sigma$.
\end{proof}

 Using Acharya's spectral criterion in Corollary~\ref{Acharya} and Stani\'c's criterion in Corollary \ref{C:Stanic}, the above characterization can be reformulated as
\begin{thm}
	A signed graph $\Sigma$ is balanced if and only if  the associated signed complete graph $K^{D^{\pm}}(\Sigma)$ has largest eigenvalue $n-1$, equivalently its spectrum is
%	$\begin{pmatrix}  n-1 & -1\\ 1 & n-1
%	\end{pmatrix}$.
	$\left(\begin{smallmatrix}  n-1 & -1\\ 1 & n-1
	\end{smallmatrix}\right)$.
	\end{thm}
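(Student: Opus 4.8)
The plan is to reduce everything to the complete graph $K_n$ and its well-known spectrum, and then invoke the spectral characterizations of balance already recorded above. First I would observe that the underlying graph of $K^{D^{\pm}}(\Sigma)$ is the complete graph $K_n$, whose adjacency matrix is $J_n-I_n$; hence $A(K_n)$ has the eigenvalue $n-1$ with multiplicity $1$ (the all-ones vector being an eigenvector) and the eigenvalue $-1$ with multiplicity $n-1$, so its adjacency spectrum is exactly $\left(\begin{smallmatrix} n-1 & -1 \\ 1 & n-1 \end{smallmatrix}\right)$ and its largest eigenvalue is $n-1$.

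Next, by Theorem~\ref{gen}, and in particular the equivalence of its conditions (i) and (iv), $\Sigma$ is balanced if and only if $D^{\max}(\Sigma)=D^{\min}(\Sigma)$ (so that $K^{D^{\pm}}(\Sigma)$ is well defined) and $K^{D^{\pm}}(\Sigma)$ is balanced. So it remains only to translate balance of the signed complete graph $K^{D^{\pm}}(\Sigma)$ into a spectral statement about it. For the spectrum formulation I would apply Acharya's spectral criterion, Corollary~\ref{Acharya}, to $K^{D^{\pm}}(\Sigma)$: it is balanced if and only if its adjacency spectrum coincides with that of its underlying graph $K_n$, that is, equals $\left(\begin{smallmatrix} n-1 & -1 \\ 1 & n-1 \end{smallmatrix}\right)$. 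For the ostensibly weaker largest-eigenvalue formulation I would instead apply Stani\'c's spectral criterion, Theorem~\ref{Stanic}, to $K^{D^{\pm}}(\Sigma)$: it is balanced if and only if its largest eigenvalue equals that of $K_n$, namely $n-1$. Chaining these equivalences with Theorem~\ref{gen} yields both stated forms, and in passing shows that the two spectral conditions are equivalent to one another.

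I do not expect a serious obstacle: the substance is just the observation that the underlying graph of $K^{D^{\pm}}$ is complete, together with the already-proven deep inputs (Theorem~\ref{gen}, Corollary~\ref{Acharya}, Theorem~\ref{Stanic}). The only point deserving a word of care is well-definedness: the statement is meaningful precisely for signed graphs of classes (I) and (II), where $D^{\max}=D^{\min}$ and $K^{D^{\pm}}$ exists; a signed graph of class (III) has no $K^{D^{\pm}}$ and is unbalanced by Theorem~\ref{gen}, consistently. Thus in the ``if'' direction the hypothesis that $K^{D^{\pm}}(\Sigma)$ has largest eigenvalue $n-1$ should be read as presupposing that $K^{D^{\pm}}(\Sigma)$ exists, after which Theorem~\ref{Stanic} applies verbatim.
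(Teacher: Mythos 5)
Your proposal is correct and follows essentially the same route as the paper, which presents this theorem as a direct reformulation of Theorem~\ref{gen} via Acharya's spectral criterion (Corollary~\ref{Acharya}) and Stani\'c's criterion; your version is in fact more detailed than the paper's one-line justification, and your attention to the well-definedness of $K^{D^{\pm}}(\Sigma)$ is a sensible addition. The only cosmetic difference is that the paper cites the weighted Corollary~\ref{C:Stanic} where you use the unweighted Theorem~\ref{Stanic}, which suffices here since $K^{D^{\pm}}(\Sigma)$ is an ordinary signed complete graph.
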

	
 Now we provide one more characterization in terms of the signed distance matrices. 
 \begin{lem}\label{Lswitch}
 Switching a signed graph $\Sigma$ does not change the set of compatible pairs of vertices.
 \end{lem}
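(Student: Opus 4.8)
The plan is to track how switching affects the sign of a single path and then observe that compatibility of a pair $\{u,v\}$ depends only on whether all shortest $uv$-paths carry a common sign, a property that is visibly switching-invariant.

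First I would fix a switching function $\zeta:V\to\{1,-1\}$ and take an arbitrary $uv$-path $P=v_0v_1\cdots v_k$ with $v_0=u$, $v_k=v$. Using $\sigma^\zeta(xy)=\zeta(x)\sigma(xy)\zeta(y)$, the sign of $P$ in $\Sigma^\zeta$ telescopes: each interior vertex $v_1,\dots,v_{k-1}$ contributes a factor $\zeta(v_i)^2=1$, so
$$\sigma^\zeta(P)=\zeta(u)\,\zeta(v)\,\sigma(P).$$
Thus switching multiplies the sign of \emph{every} $uv$-path by the same scalar $\varepsilon:=\zeta(u)\zeta(v)\in\{1,-1\}$, depending only on the endpoints. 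Since switching does not change the underlying graph $G$, it changes neither $d(u,v)$ nor the collection $\mathcal{P}_{(u,v)}$ of shortest $uv$-paths; hence the set $S(u,v):=\{\sigma(P):P\in\mathcal{P}_{(u,v)}\}\subseteq\{1,-1\}$ simply becomes $\varepsilon\cdot S(u,v)$ after switching.

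Next I would read off the consequence. By definition $u$ and $v$ are compatible in $\Sigma$ precisely when $d_{\min}(u,v)=d_{\max}(u,v)$, i.e.\ (as $d(u,v)$ is a fixed nonnegative integer) when $\min S(u,v)=\max S(u,v)$, i.e.\ when $S(u,v)$ is a singleton. Multiplying all elements of $S(u,v)$ by the fixed sign $\varepsilon$ sends singletons to singletons and two-element sets to two-element sets, so $S(u,v)$ is a singleton if and only if $\varepsilon\cdot S(u,v)$ is. Therefore $\{u,v\}$ is a compatible pair in $\Sigma$ exactly when it is a compatible pair in $\Sigma^\zeta$; the degenerate case $u=v$ is trivial since both $d_{\min}$ and $d_{\max}$ vanish.

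There is no real obstacle here. The only point needing a moment's care is the telescoping identity $\sigma^\zeta(P)=\zeta(u)\zeta(v)\sigma(P)$, which is exactly what guarantees that the correction factor is the same for all shortest $uv$-paths and hence that ``$S(u,v)$ is a singleton'' is preserved under switching; everything else follows from the observation that switching leaves the underlying graph, and thus the set of shortest paths, untouched.
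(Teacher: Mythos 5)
Your proof is correct and follows essentially the same route as the paper: the telescoping identity $\sigma^\zeta(P)=\zeta(u)\zeta(v)\sigma(P)$ is precisely the paper's observation that switching changes the sign of all $uv$-paths or of none, and the conclusion about compatibility follows as you state. You simply supply more detail than the paper's two-line argument.
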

 \begin{proof}
Consider two vertices, $u$ and $v$. Switching changes the sign of all $uv$-paths, or none.  Therefore, if $u$ and $v$ are compatible in $\Sigma$, they are compatible in the switched graph, and conversely.
 \end{proof}
 
\begin{thm}\label{swich}
	If $\Sigma$ is switched to $\Sigma^\zeta$ and if $D^{\max}(\Sigma)=D^{\min}(\Sigma)=D^{\pm}(\Sigma)$ then $D^{\max}(\Sigma^\zeta)=D^{\min}(\Sigma^\zeta)=D^{\pm}(\Sigma^\zeta)$ and $D^{\pm}(\Sigma)$ is similar to $D^{\pm}(\Sigma^\zeta)$.
\end{thm}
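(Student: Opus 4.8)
The plan is to track how switching affects the signs of shortest paths and then read off both conclusions directly. First I would record the elementary facts about switching: replacing $\sigma$ by $\sigma^\zeta$ leaves the underlying graph $G$ untouched, so every distance $d(u,v)$ is unchanged and $\mathcal{P}_{(u,v)}$ is the same set of paths; moreover, for any $uv$-path $P$ the factors $\zeta(x)$ at the internal vertices $x$ of $P$ occur squared and cancel, so that $\sigma^\zeta(P)=\zeta(u)\,\sigma(P)\,\zeta(v)$. Consequently every shortest $uv$-path has its sign multiplied by the single scalar $c_{uv}:=\zeta(u)\zeta(v)\in\{+1,-1\}$, the same for all of them.

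Second, I would bring in the hypothesis. Because $D^{\max}(\Sigma)=D^{\min}(\Sigma)$, for each pair $u,v$ all shortest $uv$-paths of $\Sigma$ carry one common sign $\varepsilon(u,v):=\sigma_{\max}(u,v)=\sigma_{\min}(u,v)$; equivalently $u$ and $v$ are compatible in $\Sigma$, so by Lemma~\ref{Lswitch} they are compatible in $\Sigma^\zeta$ as well. In $\Sigma^\zeta$ all shortest $uv$-paths therefore carry the common sign $c_{uv}\,\varepsilon(u,v)$, whence $\sigma_{\max}^\zeta(u,v)=\sigma_{\min}^\zeta(u,v)=c_{uv}\,\varepsilon(u,v)$. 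Multiplying by $d(u,v)$ gives $d_{\max}(u,v)$ in $\Sigma^\zeta$ equal to $d_{\min}(u,v)$ in $\Sigma^\zeta$ equal to $\zeta(u)\zeta(v)\,d_{\max}(u,v)$ computed in $\Sigma$. This already yields $D^{\max}(\Sigma^\zeta)=D^{\min}(\Sigma^\zeta)=D^{\pm}(\Sigma^\zeta)$.

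Third, for the similarity claim I would set $S=\diag\bigl(\zeta(v_1),\dots,\zeta(v_n)\bigr)$. The entrywise identity obtained above says precisely that the $(i,j)$ entry of $D^{\pm}(\Sigma^\zeta)$ equals $\zeta(v_i)$ times the $(i,j)$ entry of $D^{\pm}(\Sigma)$ times $\zeta(v_j)$, i.e.\ $D^{\pm}(\Sigma^\zeta)=S\,D^{\pm}(\Sigma)\,S$. Since $\zeta(v_i)^2=1$ we have $S^2=I$, so $S^{-1}=S$ and $D^{\pm}(\Sigma^\zeta)=S^{-1}D^{\pm}(\Sigma)S$, a similarity (indeed a diagonal $\pm1$ congruence). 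In particular $D^{\pm}(\Sigma)$ and $D^{\pm}(\Sigma^\zeta)$ are cospectral.

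The one point requiring care — and the reason the hypothesis $D^{\max}(\Sigma)=D^{\min}(\Sigma)$ is essential — is the interaction of $\max$ and $\min$ with multiplication by $-1$: when $c_{uv}=-1$, negating the set of path signs turns its maximum into the negative of its minimum, so a priori $d_{\max}$ in $\Sigma^\zeta$ would be governed by $d_{\min}$ in $\Sigma$ rather than by $d_{\max}$. Once we know these coincide in $\Sigma$, that ambiguity vanishes and the bookkeeping above goes through without further difficulty; I do not anticipate any other obstacle.
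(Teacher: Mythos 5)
Your proposal is correct and follows essentially the same route as the paper: the first assertion via the switching lemma (Lemma~\ref{Lswitch}), and the similarity via the diagonal matrix $S=\diag(\zeta(v_1),\dots,\zeta(v_n))$ with $S^{-1}=S$ giving $D^{\pm}(\Sigma^\zeta)=SD^{\pm}(\Sigma)S^{-1}$. You merely spell out the ``easy calculation'' the paper leaves implicit, including the worthwhile observation about why the hypothesis $D^{\max}=D^{\min}$ is needed to keep the max and min from being swapped when $\zeta(u)\zeta(v)=-1$.
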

\begin{proof}
The first part follows from Lemma \ref{Lswitch}.  Also an easy calculation shows that the switching matrix $S=\diag(\zeta(v_1)), \zeta(v_2)),\ldots, \zeta(v_n))=S^{-1}$ makes $D^{\pm}(\Sigma^\zeta)=SD^{\pm}(\Sigma)S=SD^{\pm}(\Sigma)S^{-1}$.
\end{proof}

\begin{thm}\label{Dbal}
The following properties of a signed graph $\Sigma$ are equivalent.
\begin{enumerate}%[\rm{(1)}]
\item [\rm{(i)}] $\Sigma$ is balanced.
\item [\rm{(ii)}] $D^{\max}(\Sigma)$ is cospectral with $D(G)$.
\item [\rm{(iii)}] $D^{\min}(\Sigma)$ is cospectral with $D(G)$.
\item [\rm{(iv)}] $D^{\max}(\Sigma)$ has largest eigenvalue equal to that of $D(G)$.
\item [\rm{(v)}] $D^{\min}(\Sigma)$ has largest eigenvalue equal to that of $D(G)$.
\end{enumerate}
In particular, $\Sigma$ is balanced if and only if $D^\pm(\Sigma)$ exists and is cospectral with $D(G)$.
\end{thm}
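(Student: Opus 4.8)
The plan is to recognize $D^{\max}(\Sigma)$ and $D^{\min}(\Sigma)$ as the weighted adjacency matrices of the associated signed complete graphs $K^{D^{\max}}(\Sigma)$ and $K^{D^{\min}}(\Sigma)$, and then apply the weighted criteria for balance established in Section~\ref{mainsection}, namely Theorem~\ref{wtbal} (weighted Acharya) and Corollary~\ref{C:Stanic} (weighted Stani\'c), together with Theorem~\ref{gen}.

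First I would set up the identification. Put $w(uv)=d(u,v)$ on the complete graph $K_n$ with vertex set $V(\Sigma)$; since $\Sigma$ is connected, $d(u,v)\ge 1$ for all distinct $u,v$, so $w$ is a strictly positive weight function, and it depends only on the underlying graph $G$, not on the signature. With this weight, $D^{\max}(\Sigma)=A(K^{D^{\max}}(\Sigma),w)$, because the $(u,v)$ entry of the left side is $\sigma_{\max}(u,v)\,d(u,v)$, which is exactly the weight $w(uv)$ carried by the edge $uv$ of $K^{D^{\max}}(\Sigma)$ signed by $\sigma_{\max}(u,v)$. Similarly $D^{\min}(\Sigma)=A(K^{D^{\min}}(\Sigma),w)$, and $D(G)=A(K_n,w)$ where $K_n$ is given the all-positive signature. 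Thus in each case we are comparing the adjacency matrix of a signed weighted graph with that of the all-positive weighted graph on the same underlying graph $K_n$ with the same positive weights $w$.

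Next I would dispatch the equivalences. For (i)$\Leftrightarrow$(ii): by Theorem~\ref{wtbal} applied with underlying graph $K_n$ and weight $w$, the matrices $A(K^{D^{\max}}(\Sigma),w)=D^{\max}(\Sigma)$ and $A(K_n,w)=D(G)$ are cospectral if and only if $K^{D^{\max}}(\Sigma)$ is balanced; and by Theorem~\ref{gen} the latter is equivalent to $\Sigma$ being balanced. Replacing ``$\max$'' by ``$\min$'' throughout gives (i)$\Leftrightarrow$(iii). For (i)$\Leftrightarrow$(iv) and (i)$\Leftrightarrow$(v), I would use Corollary~\ref{C:Stanic} in place of Theorem~\ref{wtbal}: balance of $K^{D^{\max}}(\Sigma)$ (resp.\ $K^{D^{\min}}(\Sigma)$) is equivalent to the largest eigenvalue of $D^{\max}(\Sigma)$ (resp.\ $D^{\min}(\Sigma)$) equaling that of $D(G)$, and Theorem~\ref{gen} again converts balance of the associated complete graph into balance of $\Sigma$. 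Finally, for the ``in particular'' clause: if $\Sigma$ is balanced then Harary's path criterion (Theorem~\ref{Harary1}) forces $\sigma_{\max}(u,v)=\sigma_{\min}(u,v)$ for all $u,v$, so $D^{\max}(\Sigma)=D^{\min}(\Sigma)=D^{\pm}(\Sigma)$ exists, and by (ii) it is cospectral with $D(G)$; conversely, if $D^{\pm}(\Sigma)$ exists and is cospectral with $D(G)$, then $D^{\max}(\Sigma)=D^{\pm}(\Sigma)$ is cospectral with $D(G)$, whence $\Sigma$ is balanced by (ii)$\Rightarrow$(i).

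I do not expect a genuine obstacle here: the content of the theorem is essentially the translation of the weighted spectral criteria of Section~\ref{mainsection} through the dictionary ``distance matrix of $\Sigma$ $=$ weighted adjacency matrix of $K^{D^{\max/\min}}(\Sigma)$.'' The only point needing care is verifying the hypotheses of Theorem~\ref{wtbal} and Corollary~\ref{C:Stanic}, i.e.\ that the weight function $w=d_G$ is everywhere nonzero and positive (this is where connectedness of $\Sigma$ is used) and that the \emph{same} weight function is used on both the signed and the all-positive sides (which holds because distances are determined by $G$ alone). With these checks in place the argument is short.
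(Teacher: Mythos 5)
Your proposal is correct and follows essentially the same route as the paper: both identify $D^{\max}(\Sigma)$ and $D^{\min}(\Sigma)$ with the weighted adjacency matrices $A(K^{D^{\max}}(\Sigma),w)$ and $A(K^{D^{\min}}(\Sigma),w)$ for $w(uv)=d_G(u,v)$, and then invoke Theorem~\ref{wtbal} for cospectrality and Corollary~\ref{C:Stanic} for the largest eigenvalue. Your write-up is in fact somewhat more careful than the paper's, since you make explicit the appeal to Theorem~\ref{gen} (balance of $\Sigma$ iff balance of the associated signed complete graph) and the positivity of the weight $w=d_G$, both of which the paper leaves implicit.
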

\begin{proof}
Consider $K_n$ with the weight function $w(uv) = d_G(u,v)$.  By Theorem \ref{wtbal}, $(K^{D^{\max}}(\Sigma),w)$ is cospectral with $(K_n,w)$.  Part (ii) is this statement in terms of adjacency matrices.  Part (iv) follows similarly from Corollary \ref{C:Stanic}.

The same argument applies for $D^{\min}(\Sigma)$.
\end{proof}

We illustrate the use of Theorem \ref{Dbal} with the example of a complete bipartite graph $K_{n,n}$.

\begin{cor}
	The signed graph $\Sigma=(K_{n,n},\sigma)$ is balanced if and only if $D^{\max}$ (or $D^{\min}$) has largest eigenvalue $3n-1$; also if and only if its spectrum is 
%	$\begin{pmatrix}  3n-2 & n-2 &-2\\ 1 & 1 & 2n-2
%	\end{pmatrix}.$
	$\left(\begin{smallmatrix}  3n-2 & n-2 &-2\\ 1 & 1 & 2n-2
	\end{smallmatrix}\right).$
\end{cor}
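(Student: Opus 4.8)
The plan is to read this off Theorem~\ref{Dbal}. Since $K_{n,n}$ is connected, that theorem applies directly: $\Sigma=(K_{n,n},\sigma)$ is balanced if and only if $D^{\max}(\Sigma)$ is cospectral with the ordinary distance matrix $D(K_{n,n})$, and also if and only if $D^{\max}(\Sigma)$ has the same largest eigenvalue as $D(K_{n,n})$; the same holds with $D^{\min}$ in place of $D^{\max}$, and in the balanced case $D^{\max}(\Sigma)=D^{\min}(\Sigma)=D^{\pm}(\Sigma)$ anyway, so it does not matter which one we use. Hence the corollary is equivalent to the single unsigned statement that $\spec D(K_{n,n}) = \left(\begin{smallmatrix} 3n-2 & n-2 & -2 \\ 1 & 1 & 2n-2\end{smallmatrix}\right)$, with largest eigenvalue $3n-2$.

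To establish that, I would write down $D(K_{n,n})$ and diagonalize it. Ordering the $2n$ vertices by parts, a vertex lies at distance $1$ from each of the $n$ vertices in the opposite part and at distance $2$ from each of the $n-1$ remaining vertices of its own part, so
\[
D(K_{n,n}) = \begin{pmatrix} 2(J_n-I_n) & J_n \\ J_n & 2(J_n-I_n) \end{pmatrix} = 2(J_{2n}-I_{2n}) - A(K_{n,n}).
\]
The matrices $J_{2n}$, $I_{2n}$ and $A(K_{n,n})$ pairwise commute and, being symmetric, share a common orthonormal eigenbasis, so the spectrum of $D(K_{n,n})$ can be read off at once; equivalently one exhibits the eigenvectors by hand: the all-ones vector $\mathbf{1}_{2n}$ yields eigenvalue $n\cdot 1+(n-1)\cdot 2 = 3n-2$; the vector equal to $+1$ on one part and $-1$ on the other yields $2(n-1)-n = n-2$; and every vector supported on a single part and summing to $0$ there yields $-2$, contributing total multiplicity $(n-1)+(n-1)=2n-2$. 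Since $1+1+(2n-2)=2n$, these exhaust the spectrum.

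Combining the two pieces proves the corollary: by Theorem~\ref{Dbal}(iv)--(v), $\Sigma$ is balanced iff $D^{\max}(\Sigma)$ (or $D^{\min}(\Sigma)$) has largest eigenvalue $3n-2$, and by Theorem~\ref{Dbal}(ii)--(iii) this is equivalent to cospectrality with $D(K_{n,n})$, i.e.\ to having the displayed spectrum. There is no real obstacle here; the argument is an application of Theorem~\ref{Dbal} followed by a routine eigenvalue computation, and the only points needing a little care are the multiplicity bookkeeping for the eigenvalue $-2$ and the observation that $D^{\max}$ and $D^{\min}$ coincide once $\Sigma$ is balanced. (The computation gives largest eigenvalue $3n-2$, consistent with the top entry of the displayed spectrum; the value ``$3n-1$'' in the statement should read $3n-2$.)
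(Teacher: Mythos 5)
Your proposal is correct and matches the paper's intended derivation: the corollary is stated precisely as an illustration of Theorem~\ref{Dbal}, so the whole content is the routine computation of $\spec D(K_{n,n})$, which you carry out correctly (the trace check $(3n-2)+(n-2)-2(2n-2)=0$ confirms the multiplicities). You are also right that the largest eigenvalue is $3n-2$, consistent with the displayed spectrum, so the ``$3n-1$'' in the statement is a typo.
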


\section{Examples of signed-distance spectra}
We prepare for the examples with a sum formula.

\begin{lem}\label{sum}
	 $\sum_{r=1}^{k}rz^r=k\frac{z^{k+1}}{z-1}-\frac{z(z^k-1)}{(z-1)^2}$. Hence, 
	 $$\sum_{r=1}^{k}r\cos(r\theta)=\frac{1}{2}\Big(\frac{k\sin((2k+1)\theta/2)}{\sin(\theta/2)} -\frac{\sin^2(k\theta/2)}{\sin^2(\theta/2)}\Big).$$
\end{lem}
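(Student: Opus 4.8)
The plan is to prove the first (purely algebraic) identity by differentiating the closed form of a finite geometric series, and then to deduce the trigonometric identity by substituting $z=e^{i\theta}$ and taking real parts.

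First I would start from the finite geometric sum $\sum_{r=0}^{k} z^r = \frac{z^{k+1}-1}{z-1}$, valid for $z\neq 1$. Differentiating both sides with respect to $z$ and multiplying through by $z$ turns the left side into $\sum_{r=1}^{k} r z^r$, while the right side becomes $z\cdot\frac{k z^{k+1}-(k+1)z^k+1}{(z-1)^2}$; a one-line rearrangement over the common denominator $(z-1)^2$ shows this equals $k\frac{z^{k+1}}{z-1}-\frac{z(z^k-1)}{(z-1)^2}$, which is the asserted formula. (Alternatively one could verify it by induction on $k$ after clearing denominators; the differentiation route is shorter.) The excluded value $z=1$ plays no role in the applications, but the identity extends to it by continuity, recovering $\sum_{r=1}^{k} r = k(k+1)/2$.

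For the trigonometric consequence I would set $z=e^{i\theta}$ with $\theta$ not an integer multiple of $2\pi$, so that $\cos(r\theta)=\operatorname{Re}(z^r)$ and hence $\sum_{r=1}^{k} r\cos(r\theta)=\operatorname{Re}\!\big(\sum_{r=1}^{k} r z^r\big)$. The key simplification is the half-angle factorization $e^{i\theta}-1 = 2i\,e^{i\theta/2}\sin(\theta/2)$, which also gives $(e^{i\theta}-1)^2 = -4\,e^{i\theta}\sin^2(\theta/2)$. Substituting these into the two terms of the closed form, the exponential factors collapse to $e^{i(2k+1)\theta/2}$ and $e^{ik\theta/2}$ respectively; since $\operatorname{Re}(-i\,e^{i\phi})=\sin\phi$, the real parts come out as $\frac{k\sin((2k+1)\theta/2)}{2\sin(\theta/2)}$ and $-\frac{\sin^2(k\theta/2)}{2\sin^2(\theta/2)}$, whose sum is exactly the claimed expression.

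There is no real obstacle in this lemma; the only thing demanding care is the bookkeeping of the half-angle factors and the powers of $i$ when extracting real parts, together with the observation that the denominators $\sin(\theta/2)$ and $\sin^2(\theta/2)$ vanish precisely when $z=1$, so the trigonometric identity is valid on exactly the same domain as the algebraic one it is derived from.
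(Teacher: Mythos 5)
Your proposal is correct and follows essentially the same route as the paper: substitute $z=e^{i\theta}$, use the half-angle factorization $e^{i\theta}-1=2i\,e^{i\theta/2}\sin(\theta/2)$, and take real parts. The only difference is that you also derive the algebraic identity $\sum_{r=1}^k rz^r$ by differentiating the geometric series, whereas the paper simply asserts that closed form and proceeds directly to the substitution.
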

\begin{proof}
	Taking $S(z)=\sum_{r=1}^{k}rz^r=k\frac{z^{k+1}}{z-1}-\frac{z(z^k-1)}{(z-1)^2}$ and putting $z=e^{i\theta}$, $$S(e^{i\theta})=k\frac{e^{i(k+1)\theta}}{e^{i\theta}-1}-\frac{e^{i\theta}(e^{ik\theta}-1)}{(e^{i\theta}-1)^2}
	=\frac{i}{2} \Big(-\frac{e^{i(k+1)\theta/2}}{\sin(\theta/2)}+ \frac{e^{ik\theta/2}\sin{k\theta/2}}{\sin^2(\theta/2)}\Big ).
$$ 
	From this $\sum_{r=1}^{k}r\cos(r\theta)=\text{Re}\, S(e^{i\theta})$, which equals the stated formula.
\end{proof}

 For the odd unbalanced cycle $C^{-}_{n}$ where $n=2k+1$, we have unique shortest path between any two vertices. Thus $D^{\max}(C^{-}_{n})=D^{\min}(C^{-}_{n})=D^{\pm}(C^{-}_{n})$ and we give the spectrum of an odd unbalanced cycle as follows.
 
\begin{thm}
	For an odd unbalanced cycle $C^{-}_{n}$ where $n=2k+1$, the spectrum of $D^\pm$ is 
	$$\begin{pmatrix}
	k(-1)^k-\dfrac{1-(-1)^k}{2}  & \dfrac{k(-1)^j}{\sin((2j+1)\frac{\pi}{2n})}-\dfrac{\sin^2((2j+1)\frac{k\pi}{2n})}{\sin^2((2j+1)\frac{\pi}{2n})} \\
	1 & 2 \quad (j=0,1,2,\ldots, k-1)
\end{pmatrix}.$$
\end{thm}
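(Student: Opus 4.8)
The plan is to pass to a convenient switching of $C^{-}_n$, write $D^{\pm}$ down explicitly, and then diagonalize it by exploiting a ``half‑period antisymmetry'' that the matrix enjoys. Since an odd cycle is geodetic, $D^{\max}=D^{\min}=D^{\pm}$, so there is a single matrix to analyze. By Theorem~\ref{swich} switching only conjugates $D^{\pm}$ by a diagonal $\pm1$ matrix, hence leaves the spectrum unchanged; and switching along a spanning path (compare Theorem~\ref{switching}) shows that $C^{-}_n$ is switching‑equivalent to the cycle on vertices $0,1,\dots,n-1$ in cyclic order whose only negative edge is $\{n-1,0\}$. For this representative, a one‑line case check — the two arcs joining $i$ and $j$ have lengths that never tie because $n$ is odd, and the shorter arc is negative exactly when it runs through $\{n-1,0\}$ — gives
\[
D^{\pm}_{ij}=f(|i-j|),\qquad f(0)=0,\quad f(t)=t\ \ (1\le t\le k),\quad f(t)=t-n\ \ (k+1\le t\le 2k),
\]
and the identity on which everything hinges,
\[
f(t)+f(n-t)=0\qquad(1\le t\le n-1),
\]
is then immediate from the two cases.

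Next I would diagonalize. Put $\omega_j=e^{\,i(2j+1)\pi/n}$ for $j=0,1,\dots,n-1$; these are the $n$ distinct roots of $\omega^{n}=-1$, and $v^{(j)}:=(1,\omega_j,\omega_j^{2},\dots,\omega_j^{\,n-1})^{\mathsf T}$ are linearly independent by Vandermonde. I claim each $v^{(j)}$ is an eigenvector. Writing $(D^{\pm}v^{(j)})_i=\omega_j^{\,i}\sum_{m=-i}^{\,n-1-i}f(|m|)\,\omega_j^{\,m}$ and advancing $i$ to $i+1$, the window sum loses the term $f(n-1-i)\,\omega_j^{\,n-1-i}$ and gains the term $f(i+1)\,\omega_j^{-(i+1)}$; since $\omega_j^{\,n}=-1$ and $f\bigl(n-(i+1)\bigr)=-f(i+1)$, these two terms are equal, so the sum is independent of $i$ and equals $\lambda_j:=\sum_{m=0}^{n-1}f(m)\omega_j^{\,m}$. (Equivalently: $D^{\pm}$ commutes with the signed shift $Q$ defined by $(Qx)_i=x_{i-1}$ for $i\ge1$ and $(Qx)_0=-x_{n-1}$, whose eigenvectors are exactly the $v^{(j)}$; the relation $D^{\pm}Q=QD^{\pm}$ is again just $f(t)+f(n-t)=0$.) Pairing $m$ with $n-m$ in the sum for $\lambda_j$ and using the two displayed identities collapses it to $\lambda_j=2\sum_{r=1}^{k}r\cos(r\theta_j)$ with $\theta_j=(2j+1)\pi/n$; in particular every $\lambda_j$ is real, as it must be.

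It then remains to invoke Lemma~\ref{sum} and sort out multiplicities. Since $(2k+1)\theta_j/2=(2j+1)\pi/2$ we have $\sin\bigl((2k+1)\theta_j/2\bigr)=(-1)^j$, so Lemma~\ref{sum} turns $\lambda_j$ into
\[
\lambda_j=\frac{k(-1)^j}{\sin\!\bigl((2j+1)\tfrac{\pi}{2n}\bigr)}-\frac{\sin^{2}\!\bigl((2j+1)\tfrac{k\pi}{2n}\bigr)}{\sin^{2}\!\bigl((2j+1)\tfrac{\pi}{2n}\bigr)}.
\]
Because $\overline{\omega_j}=\omega_{\,n-1-j}$, we get $\lambda_j=\lambda_{\,n-1-j}$; as $j$ ranges over $0,\dots,n-1$ this pairs $\{0,\dots,k-1\}$ with $\{k+1,\dots,n-1\}$ and leaves $j=k$ fixed, so counting eigenspace dimensions ($2k$ from the $k$ pairs, $1$ from $j=k$, total $n$) forces $\lambda_0,\dots,\lambda_{k-1}$ to have multiplicity $2$, $\lambda_k$ multiplicity $1$, and no other eigenvalues. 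For $j=k$ one has $\theta_k=\pi$, so $\lambda_k=2\sum_{r=1}^{k}r(-1)^r$, which evaluates to $k$ when $k$ is even and to $-(k+1)$ when $k$ is odd, i.e. $\lambda_k=k(-1)^k-\tfrac{1-(-1)^k}{2}$; this is the exceptional eigenvalue of the statement.

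The one genuinely delicate point is the eigenvector claim: one must recognize that $D^{\pm}(C^{-}_n)$ is \emph{not} a circulant (unlike $D(C_n)$) but is ``$Q$‑circulant'', so the relevant frequencies are the $2n$‑th roots of unity satisfying $\omega^{n}=-1$ rather than the ordinary $n$‑th roots of unity. Once the antisymmetry $f(t)=-f(n-t)$ is isolated, everything else is the bookkeeping above together with the summation Lemma~\ref{sum}.
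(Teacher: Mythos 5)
Your proof is correct and follows essentially the same route as the paper's: switch to a single negative edge, take the vectors $(1,\omega,\dots,\omega^{n-1})^{\mathsf T}$ with $\omega^n=-1$ as eigenvectors, reduce $\lambda_j$ to $2\sum_{r=1}^k r\cos(r\theta_j)$, apply Lemma~\ref{sum}, and pair $j$ with $n-1-j$ for the multiplicities. The only difference is that you explicitly verify the eigenvector claim via the antisymmetry $f(t)=-f(n-t)$ (the skew-circulant structure), a step the paper's computation leaves implicit.
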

\begin{proof}
First we note that by a proper switching any unbalanced cycle $C^{-}_{2k+1}=v_1v_2\cdots v_{2k}v_{2k+1}v_1$ can be switched to a cycle with only one negative edge, say $\sigma(v_{2k+1}v_1)=-1$ and in light of Theorem~\ref{swich}, we only have to provide the proof of this switched cycle. The distance matrix $D^{\pm}(C^{-}_{2k+1})$ can be written as
$$D^{\pm}(C^{-}_{2k+1})=\begin{pmatrix}
	0 & 1 & 2 & \cdots k & -k & -(k-1) \cdots & -2 & -1\\
	1 & 0 & 1 & 2 \cdots k &-k & \cdots & \cdots & -2\\
	\cdots & \cdots & \cdots & \cdots \cdots &\cdots& \cdots & \cdots &\cdots\\
	\cdots & \cdots & \cdots & \cdots \cdots &\cdots& \cdots & \cdots &\cdots\\	 
	-1 & -2 & \cdots & \cdots   &\cdots & \cdots& 1& 0
\end{pmatrix}
$$
This is a real symmetric matrix and hence its eigenvalues are real.	
Choosing $\rho=e^{i\theta}$ and the eigenvector corresponding to the eigenvalue $\lambda$ as $X=[\rho,\rho^2,\ldots,\rho^{n}]^T$ we get from the equation $D^{\pm}(C^{-}_{n})X=\lambda X$, $\lambda=\sum_{r=1}^{k}r\rho^{r}-\sum_{r=1}^{k}r\rho^{n-r}$. Choosing now $\rho^n=-1$ so that $\rho=e^{i\theta_j}=e^\frac{{(2j+1)}i\pi}{n}$ for $j=0,1,\ldots,n-1$, we get	 $\lambda_{j}=\sum_{r=1}^{k}r\rho^{r}+ \sum_{r=1}^{k}r\rho^{-r}= 2\sum_{r=1}^{k}r\cos{(r\theta_{j})}$. Using Lemma~\ref{sum},
\begin{equation}\label{lambdj}
\lambda_j=\frac{k\sin((2k+1)\theta_j/2)}{\sin(\theta_j/2)} -\frac{\sin^2(k\theta_j/2)}{\sin^2(\theta_j/2)}.
\end{equation} 
Recall that here $\theta_{j}=\frac{{(2j+1)}\pi}{2k+1}$ for $j=0,1,2,\ldots, 2k$. For $j=k$, we have $\theta_{k}=\frac{{(2k+1)}\pi}{2k+1}=\pi$ and $\rho_k=-1$ and hence from Equation~\eqref{lambdj}, $\lambda_k=k(-1)^k-\frac12[1-(-1)^k]$, which is either $k$ or $-(k+1)$ according as $k$ is even or odd. The remaining eigenvalues can be paired as
$\lambda_{2k}=\lambda_{0}, \lambda_{2k-1}=\lambda_{1},\ldots, \lambda_{k+1}=\lambda_{k-1}$. For $j=0,1,\ldots, k-1$, from Equation~\eqref{lambdj}
$$\lambda_j= \dfrac{k(-1)^j}{\sin((2j+1)\frac{\pi}{2n})}-\dfrac{\sin^2((2j+1)\frac{k\pi}{2n})}{\sin^2((2j+1)\frac{\pi}{2n})},$$
each of which repeats twice due to pairing.
\end{proof}

Now we consider an unbalanced signed wheel $(W_{n+1},\sigma)=(C_n\vee K_1,\sigma)$ where $n$ is odd and the signature $\sigma$ is such that $\sigma(e)=-1$ if $e\in E(C_n)$ and $1$ otherwise. Clearly, this signed graph is compatible (in fact, it is antibalanced).

\begin{thm}
The spectrum of $(W_{n+1},\sigma)$ is
	$$\spec D^{\pm}(W_{n+1},\sigma)=\begin{pmatrix}
	n-4\pm\sqrt{n^2-7n+16} & -2-6\cos\dfrac{2j\pi}{n}\\
	1 & 1 \ (j=1,2,\ldots, n-1)
	\end{pmatrix}.
	$$
\end{thm}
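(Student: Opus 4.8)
The plan is to write down $D^{\pm}(W_{n+1},\sigma)$ explicitly and then diagonalise it using the circulant symmetry of the rim. Label the hub $h$ and the rim vertices $v_1,\dots,v_n$, subscripts read modulo $n$. In the underlying wheel $d(h,v_i)=1$ for all $i$, $d(v_i,v_{i+1})=1$, and $d(v_i,v_j)=2$ for every non-consecutive pair (the path $v_ihv_j$ realises the distance). Since $(W_{n+1},\sigma)$ is antibalanced it is compatible, so $D^{\max}=D^{\min}=D^{\pm}$ and the entry of $D^{\pm}$ for a pair $u,v$ is $\pm d(u,v)$ with the sign common to all shortest $uv$-paths. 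A spoke is positive, so $d_{\pm}(h,v_i)=1$; a rim edge is negative, so $d_{\pm}(v_i,v_{i+1})=-1$; and for a non-consecutive pair the shortest path $v_ihv_j$ has two positive edges while any rim path of length $2$ (if one exists) has two negative edges, so $d_{\pm}(v_i,v_j)=+2$ in every case. Ordering the vertices $v_1,\dots,v_n,h$, this gives
$$
D^{\pm}(W_{n+1},\sigma)=\begin{pmatrix} M & \mathbf{1}\\ \mathbf{1}^{T} & 0\end{pmatrix},\qquad M=2(J-I)-3A(C_n),
$$
where $J$ is the $n\times n$ all-ones matrix, $I$ the identity, $\mathbf{1}$ the all-ones column of length $n$, and $A(C_n)$ the adjacency matrix of the rim $n$-cycle.

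Next I would produce $n-1$ eigenvectors from the circulant structure. For $j=1,\dots,n-1$ put $\omega_j=e^{2\pi ij/n}$ and $x_j=(\omega_j,\omega_j^{2},\dots,\omega_j^{n})^{T}$. Because $\mathbf{1}^{T}x_j=0$ when $j\neq 0$, the hub component of $D^{\pm}(x_j,0)$ vanishes, so $(x_j,0)$ is an eigenvector of $D^{\pm}$ with eigenvalue equal to the eigenvalue of $x_j$ for $M$; since $Jx_j=0$ and $A(C_n)x_j=2\cos(2\pi j/n)\,x_j$, that eigenvalue is $-2-6\cos(2\pi j/n)$. Passing to real and imaginary parts (with $j$ and $n-j$ contributing the same value, as $n$ is odd) yields exactly the $n-1$ eigenvalues $-2-6\cos\frac{2j\pi}{n}$, $j=1,\dots,n-1$.

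The two remaining eigenvalues come from the $D^{\pm}$-invariant plane spanned by $(\mathbf{1},0)$ and $(\mathbf{0},1)$ (the orthogonal complement of the span of the $(x_j,0)$). Using $M\mathbf{1}=(2n-8)\mathbf{1}$, one checks that $D^{\pm}$ acts on this plane, in that basis, by the matrix $\left(\begin{smallmatrix}2n-8 & 1\\ n & 0\end{smallmatrix}\right)$, whose characteristic polynomial $\lambda^{2}-(2n-8)\lambda-n$ has roots $(n-4)\pm\sqrt{(n-4)^{2}+n}=(n-4)\pm\sqrt{n^{2}-7n+16}$. Since $\{(x_1,0),\dots,(x_{n-1},0),(\mathbf{1},0),(\mathbf{0},1)\}$ is a basis of the whole space, these $n+1$ eigenvalues are all of them, which is the asserted spectrum.

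The computation after the first paragraph is the routine circulant/bordered-matrix calculation; the part that needs care is the sign bookkeeping that produces $M$ — in particular verifying that a non-consecutive rim pair always gets $+2$ whether or not a length-$2$ rim path exists, and that the formula still reads correctly in the degenerate case $n=3$, where $W_4=K_4$ and every rim pair is consecutive.
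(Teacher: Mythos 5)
Your proposal is correct and follows essentially the same route as the paper: both write $D^{\pm}$ as the bordered matrix with rim block $2J-2I-3A(C_n)$, extract the $n-1$ eigenvalues $-2-6\cos(2j\pi/n)$ from the circulant eigenvectors orthogonal to the all-ones vector, and get the remaining two from the $2\times 2$ equitable quotient matrix $\left(\begin{smallmatrix}2n-8&1\\ n&0\end{smallmatrix}\right)$. Your explicit sign bookkeeping for the entries of $D^{\pm}$ (including the non-consecutive rim pairs and the $n=3$ degenerate case) is a welcome addition that the paper leaves implicit.
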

\begin{proof}
	It can be seen that $A(C_n^{-})=-A(C_n)$ and the signed distance matrix of the signed wheel is $D^{\pm}=\begin{pmatrix}
	2A(\overline{C_n})-A(C_n) & J_{n\times 1}\\
	J_{1\times n} & 0
	\end{pmatrix}$ where $J$ is the all-ones matrix. The eigenvalues of $D^{\pm}$ are the eigenvalues of $2A(\overline{C_n})-A(C_n) = 2J_{n\times n} - 2I -3A(C_n)$ whose eigenvectors are orthogonal to $J_{n\times1}$ and those of the equitable quotient matrix $M=\begin{pmatrix}
	2n-8 & 1\\
	n & 0
	\end{pmatrix}$. 
	The eigenvalues of $A(C_n)$ are well known to be $\lambda_j=2\cos\dfrac{2j\pi}{n}$ for $j=0,1,2,\ldots, n-1$ with $\lambda_0 = 2$ corresponding to the eigenvector $J_{n\times1}$ and the other eigenvectors orthogonal to $J_{n\times1}$.
	The relevant eigenvalues of $2A(\overline{C_n})-A(C_n)$ are $-3\lambda_j-2= -2-6\cos\dfrac{2j\pi}{n}$ for $j=1,2,\ldots,n-1$. The eigenvalues of $M$ are $n-4\pm\sqrt{n^2-7n+16},$ which completes the proof.
\end{proof}

\section{Compatible and incompatible signed graphs}\label{section3}
We know three kinds of compatible signed graph: balanced, antibalanced, and geodetic, and there are others we do not know.  For bipartite graphs we have better information.

\begin{thm}\label{bipartite}
	A bipartite signed graph is distance-compatible if and only if it is balanced.
\end{thm}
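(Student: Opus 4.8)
The plan is to prove both directions. One direction is immediate: if $\Sigma$ is balanced then by Harary's path criterion (Theorem~\ref{Harary1}) all $uv$-paths have the same sign, so in particular all shortest $uv$-paths do, giving $\sigma_{\max}(u,v)=\sigma_{\min}(u,v)$ for every pair; hence $\Sigma$ is compatible. (This holds for arbitrary, not just bipartite, $\Sigma$ and is essentially contained in Theorem~\ref{gen}.) The substance is the converse: assuming $\Sigma$ is bipartite and \emph{not} balanced, I must exhibit a pair $u,v$ with two shortest paths of opposite sign.

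Here I would exploit bipartiteness. Fix the (unsigned) bipartition of $G$, and switch $\Sigma$ so that a spanning tree is all positive (possible by Theorem~\ref{switching}-style reasoning on a spanning tree; switching does not affect compatibility by Lemma~\ref{Lswitch}). Since $\Sigma$ is unbalanced there is a negative edge $e=xy$ not in the tree; the fundamental cycle $C$ of $e$ is an even cycle (bipartite) and is negative. The idea is to look at a shortest negative cycle $C$ in $\Sigma$, say of length $2m$. Pick two antipodal vertices $u,v$ on $C$: they are at distance $m$ along $C$ in either direction, and the two halves of $C$ are two $uv$-paths of length $m$ whose signs multiply to $\sigma(C)=-1$, so they have opposite signs. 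The key point to nail down is that these two half-cycles are actually \emph{shortest} $uv$-paths in $G$, i.e.\ $d_G(u,v)=m$.

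The main obstacle is precisely this last claim: a priori there could be a shorter $uv$-path through the rest of the graph, of length $<m$. To handle it I would argue by minimality of $C$. Suppose $Q$ is a $uv$-path with $|Q|<m$. Then $Q$ together with either half of $C$ forms a closed walk, hence contains a cycle; combining $Q$ with the two halves $C_1,C_2$ of $C$ gives two cycles $C_1\cup Q$ and $C_2\cup Q$ whose signs multiply to $\sigma(C_1)\sigma(C_2)=\sigma(C)=-1$ (the edges of $Q$ cancel), so one of them, say $C_1\cup Q$, is negative. Its length is at most $|C_1|+|Q|<m+m=2m$, contradicting minimality of $C$ — provided $C_1\cup Q$ genuinely contains a negative cycle of length $<2m$. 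One must be slightly careful that $C_1\cup Q$ need not be a cycle, but any closed walk of negative sign contains a negative cycle, and its length is bounded by the length of the walk, so a shortest negative cycle in it has length $\le |C_1|+|Q| < 2m$; this is the desired contradiction. Hence $d_G(u,v)=m$, the two halves of $C$ are shortest paths of opposite sign, and $u,v$ are incompatible, completing the proof.

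A remark worth adding: the argument shows the stronger statement that in \emph{any} unbalanced signed graph, a shortest negative cycle of even length $2m$ yields an incompatible antipodal pair; bipartiteness is used only to guarantee that the shortest negative cycle is even. Thus the obstruction to extending the theorem beyond bipartite graphs is exactly the presence of short \emph{odd} negative cycles, which is consistent with the antibalanced and geodetic examples in the paper.
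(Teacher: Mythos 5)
Your proof is correct and follows essentially the same route as the paper's: both take a shortest negative (necessarily even, by bipartiteness) cycle, observe that its two halves are opposite-signed paths joining a pair of antipodal vertices, and rule out a shorter connecting path by showing it would combine with a half-cycle into a negative closed walk, hence a negative cycle shorter than the minimal one. The only differences are cosmetic: you argue the contrapositive directly and glue the hypothetical shorter path to both halves to see that one resulting closed walk is negative, whereas the paper assumes compatibility for contradiction, switches so that the shorter path is positive, and glues it only to the negative half.
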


\begin{proof}
Let $\Sigma$ be a compatible bipartite signed graph and suppose it is unbalanced.  Then there is a shortest negative cycle, say $C = v_0v_1 \cdots v_k v_{k+1} \cdots v_{2k}$, of length $2k$, where $v_0=v_{2k}$.  In $C$ there are two $v_0v_k$-paths of opposite sign, say they are $P = v_0v_1 \cdots v_k$ and $Q = v_{2k} \cdots v_{k+1}v_k$.  Since $P$ and $Q$ have different signs, there is a shorter $v_0v_k$-path, $R$, of length $l$.  By switching and choice of notation we may assume $P$ and $R$ are positive and $Q$ is negative.  Now, the concatenation $Q R^{-1}$ is a negative closed walk, so it contains a negative cycle, $C'$.  But the length of $QR^{-1}$ is $k+l < 2k$, so the length of $C'$ is at most $k+l < 2k$, the length of $C$.  Thus, $C'$ is a shorter negative cycle than $C$, a contradiction.  It follows that all cycles are positive.
	
The converse is known from Theorem \ref{Harary1}.
\end{proof}

\begin{cor}\label{biD}
Let $\Sigma$ be a bipartite signed graph. Then, $\Sigma$ is balanced if and only if $D^{\max}(\Sigma)=D^{\min}(\Sigma).$
\end{cor}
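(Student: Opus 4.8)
The plan is to read this off Theorem~\ref{bipartite} together with the definition of distance-compatibility. First I would observe that the matrix equation $D^{\max}(\Sigma)=D^{\min}(\Sigma)$ holds exactly when $d_{\max}(u,v)=d_{\min}(u,v)$ for every pair of vertices $u,v$, since the two matrices are by definition the entrywise arrays $\big(d_{\max}(u,v)\big)_{n\times n}$ and $\big(d_{\min}(u,v)\big)_{n\times n}$. But the condition ``$d_{\max}(u,v)=d_{\min}(u,v)$ for all $u,v$'' is precisely the statement that $\Sigma$ is distance-compatible.

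Next, since $\Sigma$ is bipartite by hypothesis, Theorem~\ref{bipartite} applies: a bipartite signed graph is distance-compatible if and only if it is balanced. Combining the two equivalences yields ``$\Sigma$ is balanced $\Longleftrightarrow$ $D^{\max}(\Sigma)=D^{\min}(\Sigma)$,'' which is the assertion.

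I do not expect any real obstacle here: the corollary is just Theorem~\ref{bipartite} restated in matrix language, so the substance is entirely contained in that theorem. If one preferred an argument that does not invoke the word ``compatible'' explicitly, one could instead unfold it directly. For the forward implication, balance gives, via Harary's path criterion (Theorem~\ref{Harary1}), that all $uv$-paths---hence in particular all shortest $uv$-paths---carry the same sign, so $\sigma_{\max}(u,v)=\sigma_{\min}(u,v)$ for every $u,v$ and the two matrices coincide. For the reverse implication, equality of the matrices makes every pair of vertices compatible, and then the shortest-negative-cycle argument used in the proof of Theorem~\ref{bipartite} (concatenating the negative path $Q$ with the reverse of a strictly shorter $v_0v_k$-path to produce a shorter negative closed walk, hence a shorter negative cycle) yields a contradiction unless $\Sigma$ is balanced; bipartiteness is used only at that last step, exactly as in Theorem~\ref{bipartite}.
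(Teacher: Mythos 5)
Your proposal is correct and matches the paper's (implicit) derivation: the corollary is exactly Theorem~\ref{bipartite} restated via the observation that $D^{\max}(\Sigma)=D^{\min}(\Sigma)$ is, entry by entry, the definition of distance-compatibility. The paper offers no separate proof for precisely this reason, so your one-line reduction is the intended argument.
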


\begin{thm}\label{blockreduction}
	A signed graph is compatible if and only every block of it is compatible.
\end{thm}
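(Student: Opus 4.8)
The plan is to prove the two directions separately, with the forward direction (compatible $\Rightarrow$ every block compatible) being almost immediate and the reverse direction (every block compatible $\Rightarrow$ compatible) carrying the real content. The key structural fact I would invoke is that in any connected graph, a shortest $uv$-path passes through a determined sequence of blocks and cut-vertices: if $u$ and $v$ lie in different blocks, there is a unique sequence of blocks $B_1, B_2, \dots, B_m$ and cut-vertices $c_1, \dots, c_{m-1}$ with $u \in B_1$, $v \in B_m$, $c_i \in B_i \cap B_{i+1}$, and every $uv$-path (in particular every shortest one) is a concatenation of a $uc_1$-path in $B_1$, a $c_1c_2$-path in $B_2$, \dots, a $c_{m-1}v$-path in $B_m$. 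Consequently $d(u,v) = d_{B_1}(u,c_1) + d_{B_2}(c_1,c_2) + \cdots + d_{B_m}(c_{m-1},v)$, and a shortest $uv$-path is exactly a concatenation of shortest paths through the successive blocks. This is the engine for everything.

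For the forward direction: if $u,v$ lie in a common block $B$ and $u,v$ are compatible in $\Sigma$, I must check they are compatible in $B$. Every $uv$-path of length $d(u,v)$ lies entirely in $B$ (a shortest path between two vertices of a block stays in that block, since leaving and returning would force repeating the cut-vertex), so $\mathcal{P}_{(u,v)}$ is the same whether computed in $\Sigma$ or in $B$, and the sign of each such path is the same; hence $\sigma_{\max}$ and $\sigma_{\min}$ agree, so compatibility transfers. For the reverse direction: assume every block is compatible and take arbitrary $u,v \in V(\Sigma)$. If they share a block, done by hypothesis. Otherwise use the block decomposition above: a shortest $uv$-path is a concatenation $R_1 R_2 \cdots R_m$ where $R_i$ is a shortest path in $B_i$ between the relevant cut-vertices (or endpoints), and its sign is $\prod_i \sigma(R_i)$. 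Since $B_i$ is compatible, every shortest path between those two vertices of $B_i$ has the \emph{same} sign $\epsilon_i$ (this is precisely what $d_{\max}=d_{\min}$ says for that pair). Therefore \emph{every} shortest $uv$-path in $\Sigma$ has sign $\prod_i \epsilon_i$, a fixed value; so $\sigma_{\max}(u,v) = \sigma_{\min}(u,v)$ and $u,v$ are compatible.

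The main obstacle — really the only nonroutine point — is justifying that a shortest $uv$-path is \emph{precisely} a concatenation of shortest paths within the intermediate blocks, and that its sign is the product of the within-block path signs over paths that range independently over all shortest options. Both facts follow from the cut-vertex structure (any $uv$-walk must pass through each $c_i$, so its length is at least $\sum_i d_{B_i}(\cdot,\cdot)$, with equality iff each segment is a within-block geodesic, and the segments can be chosen independently), and the sign multiplies because $\sigma$ is multiplicative over edge-disjoint concatenation. I would state this as a short lemma or simply fold it into the proof; it needs only a sentence or two once the block-tree picture is in place. One should also note the degenerate cases (consecutive cut-vertices that coincide, or $u$ or $v$ itself a cut-vertex), which are handled by allowing trivial single-vertex segments and do not affect the argument.
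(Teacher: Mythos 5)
Your proposal is correct and follows essentially the same route as the paper's proof: the reverse direction decomposes any shortest $uv$-path along the block/cut-vertex tree into within-block segments, observes each segment must be a shortest path in its block, and uses block compatibility to fix the sign of each segment and hence of the whole path. The forward direction you spell out (shortest paths between two vertices of a block stay inside that block) is the same fact the paper dismisses as trivial.
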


\begin{proof}
	The ``only if'' is trivial.
	
	Suppose every block of $\Sigma$ is compatible.  Consider two vertices, $u$ and $v$, and two shortest $uv$-paths, $P$ and $Q$ (if they exist).  If $u$ and $v$ are in the same block, we are done, since all $uv$-paths lie within that block.  If not, consider the block/cutpoint tree $T$ of $\Sigma$.  Any path from $u$ to $v$ starts at a block $B_u$ that contains $u$ and ends at a block $B_v$ that contains $v$.  (The block $B_u$ is unique if $u$ is not a cutpoint.  If $u$ is a cutpoint, let $B_u$ be the nearest block to $v$ in the cutpoint tree.  The same for $B_v$.)  There is a unique path from $B_u$ to $B_v$ in $T$.  Let that path be $B_u=B_0,c_1, B_1,c_2, \ldots,c_l, B_l = B_v$, where $c_i$ is the cutpoint at which $B_{i-1}$ and $B_i$ intersect.  Both $P$ and $Q$ must go through all these blocks in order; so we can split them into component paths, $P_i = P\cap B_i$ and $Q_i = Q \cap B_i$.  Then $P = P_0c_1P_1c_2 \cdots c_l P_l$ and $Q = Q_0c_1Q_1c_2 \cdots c_l Q_l$.
	
	Now, $P_0$ is a shortest path connecting $u$ to $c_1$, $P_i$ is a shortest path from $c_{i}$ to $c_{i+1}$ for $1 \leq i < l$, and $P_l$ is a shortest path from $c_l$ to $v$.  (If they were not shortest, then $P$ would not be shortest.)  Similarly, each segment $Q_i$ is a shortest path with the same endpoints as $P_i$.  By compatibility of the blocks, $\sigma(P_i) = \sigma(Q_i)$ for all $i$; therefore $\sigma(P) = \sigma(Q)$.  Thus, $\Sigma$ is compatible.
\end{proof}

The characterization problem for all compatible signed graphs is reduced by these theorems to the case of a 2-connected, non-bipartite graph that is not a cycle.  That appears to be a hard problem.  We know there exist examples that are not any of the three kinds stated at the beginning of this section; we conclude with such an example.  We are presently engaged in research attempting to solve this problem.

\begin{ex}\label{counterex}
Let $P$ and $Q$ be graphs of orders $p$ and $q$, respectively, that are 2-connected, incomplete, and not bipartite.  Consider $P$ as an all-positive signed graph and $Q$ as an all-negative signed graph.  Let $K_{p,q}$ be the complete bipartite graph considered as an all-positive signed graph, with partition sets $A$ and $B$ of orders $p$ and $q$, respectively.  Now identify $V(P)$ with $A$ and $V(Q)$ with $B$; this gives a signed graph $\Sigma$.  We prove that $\Sigma$ is not balanced, not antibalanced, and not geodetic, but it is distance-compatible.

Because $P$ contains a positive odd cycle and $Q$ contains a negative odd cycle, $\Sigma$ is neither balanced nor antibalanced.  The diameter of $\Sigma$ is 2.  Since $P, Q$ are 2-connected, $p, q \geq 3$; hence any two non-adjacent vertices in $P$ (or in $Q$) are joined by multiple shortest paths of length 2, all of which are positive.  (A 2-path in $K_{p,q}$ or $P$ is positive.  A 2-path in $Q$ is positive because $Q$ is all negative.)  It follows that $\Sigma$ is compatible but it is not geodetic.
\end{ex}

We note that there are many graphs suitable to be $P$ and $Q$, such as the complemented line graphs of complete graphs, $\overline{L(K_n)}$ for $n\geq 5$ ($n=5$ being the Petersen graph).

\section*{Acknowledgements}

The second and third authors would like to acknowledge their gratitude to the Council of Scientific and Industrial Research (CSIR), India, for the financial support under the CSIR Junior Research Fellowship scheme, vide order nos.: 09/1108(0032)/2018-EMR-I and 09/1108(0016)/2017-EMR-I, respectively. The fourth author would like to acknowledge her gratitude to Science and Engineering Research Board (SERB), Govt.\ of India, for the financial support under  the scheme Mathematical Research Impact Centric Support (MATRICS), vide order no.: File No. MTR/2017/000689.  
The last author thanks the SGPNR for support under grant no.\ 20A-01079.

\end{document}